\newtheorem{theorem}{Theorem}[section]
\newtheorem{example}[theorem]{Example}
\newtheorem{lemma}[theorem]{Lemma}
\newtheorem{proposition}[theorem]{Proposition}
\newcommand{\DD}{\mathcal{D}}
\newcommand{\darrow}{{\downarrow}}
\newcommand{\restricted}[2]{#1{\mid}_{#2}}
\def\dom{\mathrm{dom}}
\title{Noncommutative frames}
\author{Karin Cvetko-Vah}
\address{ University of Ljubljana \\Faculty of Mathematics and Physics\\Jadranska 19\\1000 Ljubljana \\Slovenia}
\begin{document}
\maketitle

\begin{abstract}
We explore algebraic properties of noncommutative frames. The concept of noncommutative frame is due to  Le Bruyn, who introduced it in connection with noncommutative covers of the Connes-Consani arithmetic site.
\end{abstract}

\section{Introduction}

The motivation for our definition of a noncommutative frame comes from the following interesting example of noncommutative covers on the arithmetic site that is due to Le Bruyn \cite{lebruyn2016}. We refer the reader to \cite{connes2014} and \cite{connes2016} for the definitions of an arithmetic site, and to \cite{lebruyn2015} for the definition of the sieve topology on the arithmetic site.

\begin{example}[\cite{lebruyn2016}]\label{ex:lebruyn}
\emph{
The sieve topology on the arithmetic site is defined by the basic open sets that correspond to sieves $S\in \mathbf \Omega$ and are denoted by $\mathbb X_s(S)$, where $\mathbf \Omega$ is the subobject-classifier of the arithmetic site topos $\hat C$. The sheaf $\mathcal O^c$ of constructible truth fluctuations has as sections over the open set $\mathbb X_s(S)$ for $S\in \mathbf \omega$ the set of all continuous functions $x$ from $\mathbb X_s(S)$ (with the induced patch topology) to the Boolean semifield $\mathbb B=\{0,1\}$ (with the discrete topology).
A noncommutative frame $\mathbf \Theta$, which represents the set of opens of a noncommutative topology, is defined in \cite{lebruyn2016} as the set of all pairs $(S,x)$, where $S\in \mathbf \Omega$ and $x:\mathbb X_s(S)\to \mathbb B$ continuous. The noncommutative frame operations are defined on $\mathbf \Theta$ by:
\begin{eqnarray*}
(S,x)\land (T,y)=(S\land T, x|_{\mathbb X_s(S)\cap \mathbb X_s(T)}) \\
(S,x)\lor (T,y)=(S\lor T, y|_{\mathbb X_s(T)} \cup x|_{\mathbb X_s(S)- \mathbb X_s(T)} )\\
(S,x)\rightarrow (T,y)=(S\rightarrow T, y\cup 1_{\mathbb X_s(S\rightarrow T)- X_s(T)} ).\\
\end{eqnarray*}
}
\end{example}

Inspired by \cite{lebruyn2016}, in the present paper we define notions of noncommutative Heyting algebras and noncommutative frames. A careful reader will notice that our definition of a noncommutative frame differs from Le Bruyn's definition given in the above example in that we  require our algebras to be join complete and satisfy infinite distributive laws.
Although the present paper provides algebraic investigation into the structures used in \cite{lebruyn2016}, it makes no use of algebraic-geometric and topos-theoretic concepts. We refer the reader who is interested in applications of noncommutative frames to noncommutative generalizations of topoi to \cite{CV2017}.

 Our work belongs to the context of skew lattices which are noncommutative generalizations of lattices. Skew lattices were introduced in 1949 by the mathematical physicist Pascual Jordan, a co-founder of the quantum field theory, who was exploring mathematical enviroments suitable to encode the logic of quantum mechanics \cite{jordan}. The theory of skew lattices was revived in late 80's and early 90's when Jonathan Leech wrote a series of papers on the subject, beginning with \cite{L1}. Leech's motivation came mainly from the algebraic ring  theory.  Leech's survey paper \cite{L3} is an excellent source for learning the basics of  skew lattice theory.

The paper is structured as follows. We begin Section 2  with the definition of a skew lattice, followed by  other important definitions and results. 
In Section 3 we define noncommutative Heyting algebras and explore their algebraic properties. In Section 4 we define noncommutative frames that generalize the usual notion of a frame. Theorem 4.4 establishes the relation between frames and noncommutative frames. Theorem 4.5 yields that noncommutative frames are  precisely join complete noncommutative Heyting algebras that satisfy the infinite distributive laws.

\section{Preliminaries}

Following \cite{L1}, a \emph{skew lattice} is an algebra $(S;\land, \lor)$ where
$\wedge$ and $\vee$ are   idempotent and associative binary operations that satisfy the absorption laws
$x\land (x\lor y)=x=x\lor (x\land y)$ { and } $(x\land y)\lor y=y=(x\lor y)\land y$.  Given a skew lattice $S$ and $x,y\in S$ the following equivalences hold: 
\begin{eqnarray*}
x\land y=x \Leftrightarrow x\lor y=y \qquad \text{and} \qquad
 x\land y=y\Leftrightarrow x\lor y=x.
\end{eqnarray*}
A skew lattice is a lattice when both operations $\land$, $\lor$ are commutative. By a result of \cite{L1}, the operation $\land$ is commutative if and only if $\lor$ is such.

The \emph{natural partial order} is defined on a skew lattice $S$ by $x\leq y$ iff $x\land y=x=y\land x$, or equivalently $x\lor y =y=y\lor x$; and the \emph{natural preorder} is defined by $x\preceq y$ iff $x\land y\land x=x$, or equivalently $y\lor x\lor y =y$. The natural preorder induces Green's equivalence relation $\DD$ defined by $x\DD y$ iff $x\preceq y$ and $y\preceq x$. Leech's first decomposition theorem for skew lattices \cite{L1} yields that $\DD$ is a congruence on a skew lattice $S$, each congruence class is a rectangular subalgebra (characterized by $x\land y=y\lor x$) and $S/\DD$ is a maximal lattice image of $S$. We denote the $\DD$-class containing $x$ by $\DD_x$.

 A skew lattice is  \emph{strongly distributive} if it satisfies the identities:
\[(x\lor y)\land z=(x\land z)\lor (y\land z) \text{ and } x\land (y\lor z)=(x\land y)\lor (x\land z).
\]
Strongly distributive skew lattices are \emph{distributive} \cite{L1, L2}, ie. they satisfy the identities:
\begin{eqnarray}
x \land (y\lor z)\land x=(x\land y\land x)\lor (x\land z\land x), \\
\label{D2} x\lor (y\land z) \lor x=(x\lor y\lor x)\land (x\lor z\lor x).
\end{eqnarray}

A skew lattice $S$  is  \emph{symmetric} if given any $x,y\in S$, $x\land y=y\land x$ iff $x\lor y=y\lor x$.  A skew lattice is called \emph{normal} provided that it satisfies the identity $x\land y\land z\land x=x\land z\land y\land x$.  

\begin{lemma}[\cite{bl}] \label{lemma:sd-equiv-def}
A skew lattice is  strongly distributive if and only if it is symmetric and normal and $S/\DD$ is a distributive lattice. 
\end{lemma}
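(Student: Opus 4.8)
The plan is to prove the two implications separately, in both cases using Leech's first decomposition theorem to move between $S$ and its maximal lattice image $S/\DD$, where $\land$ and $\lor$ commute.

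For the forward implication, suppose $S$ is strongly distributive. The strong distributive identities are inherited by the quotient $S/\DD$, where, the operations being commutative, they reduce to the ordinary distributive law; hence $S/\DD$ is a distributive lattice. It then remains to derive ``normal'' and ``symmetric'' as equational consequences of strong distributivity together with absorption, idempotency and associativity. For normality one checks $x\land y\land z\land x = x\land z\land y\land x$ by term rewriting; a useful intermediate remark is that in any skew lattice the map $\varphi_x\colon s\mapsto x\land s\land x$ has image the principal subset $\{u\in S: u\le x\}$, and strong distributivity forces this subset to be a commutative sublattice, which is exactly what normality says. For symmetry one shows $x\land y=y\land x$ implies $x\lor y=y\lor x$ and conversely, again by a short manipulation of the defining laws. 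I expect these derivations to be routine once the right auxiliary identities are isolated, so the forward direction is not the hard part.

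The converse is the main content. Assume $S$ is symmetric and normal and $S/\DD$ is a distributive lattice, and fix $x,y,z\in S$; for definiteness take $L := x\land(y\lor z)$ and $R := (x\land y)\lor(x\land z)$, the other strong distributive identity being handled identically. Projecting to $S/\DD$ and using that this quotient is a \emph{distributive} lattice gives $\overline L=\overline R$, that is $L\Drel R$; so $L$ and $R$ lie in one $\DD$-class $D$, a rectangular band. Since the natural partial order is trivial on each $\DD$-class, it now suffices to prove $L\le R$ inside $D$, i.e.\ $L\land R = L$ and $R\land L = L$, whence $L = R$. This last step is where all three hypotheses enter: normality lets one permute meet factors bracketed between two copies of a common element, symmetry is precisely what converts a commuting pair of meets into the corresponding commuting pair of joins (needed to open up the joins inside $L$ and $R$), and distributivity of $S/\DD$ pins down the $\DD$-classes of all the subterms involved.

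The main obstacle is thus the final computation in the converse: proving $L\land R = L$ and $R\land L = L$ by rewriting $x\land(y\lor z)$ and $(x\land y)\lor(x\land z)$ using only symmetry, normality, absorption and associativity. One way to keep the bookkeeping under control is to first split off the left-handed and right-handed cases via Leech's second decomposition theorem: within each one-handed factor a $\DD$-class satisfies $x\land y = x$ or $x\land y = y$, which collapses many subterms, normality degenerates to the simpler left-/right-normal identity, one verifies the identities there, and then reassembles. Alternatively one can stay in $S$ and work with $\varphi_x(s)=x\land s\land x$, which normality already makes a meet-homomorphism onto $\{u: u\le x\}$, and show that symmetry together with distributivity of $S/\DD$ makes it a join-homomorphism as well; that statement is equivalent to the strong distributive laws.
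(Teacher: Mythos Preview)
The paper does not prove this lemma: it is stated with a citation to Bignall--Leech \cite{bl} and no argument is given in the text, so there is no proof here to compare your proposal against.

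As an independent attempt, your plan is along the right lines. The forward direction is indeed the easy half, and your remark that strong distributivity forces each $x\darrow$ to be a (commutative) sublattice is exactly the content of normality via Lemma~\ref{lemma:normal-lattice}. For the converse, first locating $L$ and $R$ in the same $\DD$-class using distributivity of $S/\DD$, and then reducing to the left-handed and right-handed factors via Leech's second decomposition, is precisely how the original Bignall--Leech argument proceeds. That said, what you have written is a strategy rather than a proof: the ``routine'' derivations of symmetry and normality from strong distributivity are not shown, and in the converse the crucial computation that $L\land R=L=R\land L$ (or, in your alternative formulation, that $\varphi_x$ preserves joins) is only asserted. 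These steps are where the work lies; if you want a self-contained proof you will need to write them out, and the cleanest route is the one you already suggest---pass to the one-sided factors, where normality collapses to left/right normality and the verification becomes a short calculation.
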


\begin{lemma}[\cite{L4}] \label{lemma:normal-lattice}
A skew lattice is normal if and only if $u\darrow =\{x\in S\;|\; x\leq u\}$ is a lattice for all $u\in S$. 
\end{lemma}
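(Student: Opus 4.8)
The plan is to derive both directions from one observation: for every $u\in S$ the set $u\darrow$ is a subalgebra of $S$. Indeed, if $a,b\le u$ then $a=u\wedge a=a\wedge u$ and $b=u\wedge b=b\wedge u$, so $(a\wedge b)\wedge u=a\wedge(b\wedge u)=a\wedge b$ and $u\wedge(a\wedge b)=(u\wedge a)\wedge b=a\wedge b$, giving $a\wedge b\le u$; using the form $c\le u\Leftrightarrow c\vee u=u=u\vee c$ of the natural partial order one gets $a\vee b\le u$ in the same way. Thus $u\darrow$ is itself a skew lattice whose operations, natural partial order and relation $\DD$ are the restrictions of those of $S$; moreover $x\wedge s\wedge x\le x$ for every $s\in S$ by idempotency and associativity alone, so that in fact $u\darrow=\{\,u\wedge s\wedge u \mid s\in S\,\}$.

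For the implication ``normal $\Rightarrow$ every $u\darrow$ is a lattice'', fix $u$ and take $a,b\le u$. Using the equalities noted above,
\[
a\wedge b=(u\wedge a)\wedge(b\wedge u)=u\wedge a\wedge b\wedge u=u\wedge b\wedge a\wedge u=(u\wedge b)\wedge(a\wedge u)=b\wedge a,
\]
the central step being exactly the normal identity with outer variable $u$. Hence $\wedge$ is commutative on the skew lattice $u\darrow$, so by the result of \cite{L1} recalled above so is $\vee$, and a skew lattice whose operations are commutative is a lattice; therefore $u\darrow$ is a lattice.

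For the converse, assume every $u\darrow$ is a lattice and fix $x,y,z\in S$; I must prove $x\wedge y\wedge z\wedge x=x\wedge z\wedge y\wedge x$. Write $w_1=x\wedge y\wedge z\wedge x$ and $w_2=x\wedge z\wedge y\wedge x$. By the observation above $w_1,w_2\le x$, so both belong to the lattice $x\darrow$; since in a lattice the natural preorder $\preceq$ coincides with $\le$, the relation $\DD$ of $x\darrow$ is the identity, and it therefore suffices to show $w_1\Drel w_2$ in $S$. Applying the canonical epimorphism onto the maximal lattice image $S/\DD$ of Leech's first decomposition theorem and using commutativity of meet there, $w_1$ and $w_2$ have the same image, i.e.\ $w_1\Drel w_2$; hence $w_1=w_2$, which is the normal identity.

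The argument is essentially bookkeeping, but two points deserve care: that $u\darrow$ really is a subalgebra with induced order and induced $\DD$ (this is what makes ``$u\darrow$ is a lattice'' strong enough to force $\DD$ to collapse on $u\darrow$), and the choice, in the first direction, of the bracketing $(u\wedge a)\wedge(b\wedge u)$ which lets the normal identity be applied verbatim. If one prefers not to pass through $S/\DD$ in the converse, one can instead verify $w_1\Drel w_2$ directly by checking $w_1\preceq w_2$ and $w_2\preceq w_1$ with a short word manipulation in the $\wedge$-band; that is the only step I expect to be even mildly laborious.
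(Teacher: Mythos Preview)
The paper does not supply its own proof of this lemma; it is quoted from Leech~\cite{L4} without argument, so there is nothing to compare against. That said, your proof is correct and self-contained within the framework the paper already sets up. The forward direction applies the normal identity verbatim with outer variable $u$ to force $\wedge$ to commute on $u\darrow$, and then invokes the result from \cite{L1} (recalled in the paper) that commutativity of $\wedge$ is equivalent to commutativity of $\vee$. The converse is equally clean: since $x\darrow$ is assumed to be a lattice, the relation $\DD$ restricted to it is trivial, while Leech's first decomposition theorem gives $w_1\Drel w_2$ in $S$; hence $w_1=w_2$. The preliminary observation that each $u\darrow$ is a subalgebra with the induced natural order and induced $\DD$ is exactly what makes the converse work, and you flag this point correctly. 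One cosmetic remark: the full equality $u\darrow=\{u\wedge s\wedge u\mid s\in S\}$ is never used later---only the inclusion $x\wedge s\wedge x\le x$ is needed to place $w_1,w_2$ in $x\darrow$---so you could drop the reverse inclusion without loss.
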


If $S$ is a symmetric skew lattice then we say that elements $x$ and $y$ in $S$ \emph{commute} if $x\land y=y\land x$ (and hence also $x\lor y=y\lor x$); a subset $A\subseteq S$ is a \emph{commuting subset} if $x$ and $y$ commute for all  $x,y\in A$.
 A \emph{lattice section} $L$ of a skew lattice $S$ is a sub-algebra that is a lattice (ie. both $\land$ and $\lor$ are commutative on $L$) which intersects each $\DD$-class in exactly one element.  A lattice section (when it exists) of a skew lattice is a maximal commuting subset   isomorphic to its maximal lattice image by a result of \cite{L1}. If a normal skew lattice $S$ has a top $\DD$-class $T$ then given $t\in T$,  $t\darrow =\{x\in S\;|\; x\leq t\}$ is a lattice section of $S$. 
 A skew lattice has a bottom element $0$ if $x\lor 0=x=x\lor 0$, or equivalently $x\land 0=0=0\land x$) holds for all $x\in S$.

A connection between strongly distributive skew lattices and sheaves over distributive skew lattices was established in \cite{skew-priestley}, where it was shown that the category of  left-handed (ie. satisfying $x\land y\land x=x\land y$, or equivalently $x\lor y\lor x=y\lor x$) strongly distributive skew lattice with $0$ is dual to sheaves over locally compact Priestley spaces with suitable morphisms. Via this duality the elements of the skew lattice $S$ are represented as  sections over compact and open subsets of the Priestley space of the distributive lattice $S/\DD$. Given two such sections $s$ and $r$ the skew lattice operations are defined by:
\[\begin{array}{ll}
  \text{\emph{Restriction: }} & s \land r = \restricted{s}{\dom{s}\cap \dom{r}}.\\
  \text{\emph{Override: }} & s \lor r = r\cup \restricted{s}{\dom{s}\setminus \dom{r}}.\\
\end{array}
\]
We refer the reader to \cite{P} and \cite{P1} for the definition and further details regarding the Priestley space of a distributive lattice.

A \emph{Heyting algebra} is an algebra $(H;\land,\lor, \rightarrow, 1, 0)$ such that $(H,\land,\lor, 1, 0)$ is a bounded distributive lattice that satisfies  the following set of axioms:
\begin{itemize}
     \item[(H1)] $(x\rightarrow x)=1$,
     \item[(H2)] $x\land (x\rightarrow y)=x\land y$,
     \item[(H3)] $y\land (x\rightarrow y)=y$,
     \item[(H4)] $x\rightarrow (y\land z)=(x\rightarrow y)\land (x\rightarrow z).$
\end{itemize}
Equivalently, the axioms (H1)--(H4) can be replaced by the following single axiom:
\begin{itemize}
   \item[(HA)] $x\land y\leq z$ iff $x\leq y\rightarrow z$.
 \end{itemize}

A \emph{frame} is a lattice that has all finite meets and all joins (finite and infinite), and satisfies the infinite distributive law:
\[x\land \bigvee_i y_i=\bigvee_i(x\land y_i).
\]
Frames are exactly complete Heyting algebras, see \cite{MM} for details.
 
\section{Noncommutative Heyting algebras}

\begin{lemma}\label{lemma:sd-top-comm}
If a strongly distributive skew lattice $S$ contains a top element $1$ (satisfying $1\land x=x=x\land 1$ and $1\lor x=1=x\lor 1$) then it is commutative.
\end{lemma}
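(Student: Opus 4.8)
The plan is to deduce commutativity of $\land$ from a single well-chosen instance of the normality identity, and then to transfer commutativity to $\lor$ by the standard equivalence for skew lattices. So almost all of the work is concentrated in one substitution.

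First I would invoke Lemma~\ref{lemma:sd-equiv-def}: since $S$ is strongly distributive it is in particular normal, that is, $S$ satisfies $x\land y\land z\land x = x\land z\land y\land x$ for all $x,y,z\in S$. This is the only consequence of the strong-distributivity hypothesis that the argument actually needs.

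Next, substitute $x=1$ into the normality identity. Since $1$ is a two-sided unit for $\land$, i.e. $1\land w = w = w\land 1$ for every $w\in S$, associativity of $\land$ lets the left-hand side collapse to $1\land y\land z\land 1 = y\land z$ and the right-hand side to $1\land z\land y\land 1 = z\land y$. Hence $y\land z = z\land y$ for all $y,z\in S$, so $\land$ is commutative on $S$. Finally, by the result of \cite{L1} recalled in Section~2 (in any skew lattice $\land$ is commutative iff $\lor$ is), $\lor$ is commutative as well, so $S$ is a commutative skew lattice, that is, a lattice, as claimed.

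The argument is short and I do not anticipate a genuine obstacle; the only point demanding a moment's care is the parenthesis bookkeeping when simplifying $1\land y\land z\land 1$, which is legitimate precisely because $\land$ is associative and $1$ is an honest two-sided unit. It is worth remarking that the proof really uses only that $S$ is normal and has a $\land$-unit, not the full strength of strong distributivity; an alternative route is via Lemma~\ref{lemma:normal-lattice}, since a top element forces $1\darrow = S$, which would then have to be a lattice.
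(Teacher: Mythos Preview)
Your proof is correct. Both you and the paper begin by invoking Lemma~\ref{lemma:sd-equiv-def} to get normality, but then diverge: you substitute $x=1$ directly into the normality identity $x\land y\land z\land x = x\land z\land y\land x$ to obtain $y\land z = z\land y$, and then cite \cite{L1} to transfer commutativity to $\lor$. The paper instead takes precisely the alternative route you mention at the end: it appeals to Lemma~\ref{lemma:normal-lattice} to conclude that $1\darrow$ is a lattice, and then observes $S = 1\darrow$. Your argument is a touch more self-contained (it avoids the extra black-box lemma and makes the mechanism explicit), while the paper's is terser given that Lemma~\ref{lemma:normal-lattice} is already available; substantively they are the same idea viewed from two angles.
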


\begin{proof}
$S$ is normal by Lemma \ref{lemma:sd-equiv-def}. Lemma \ref{lemma:normal-lattice} implies that $1\darrow$ is a lattice. Obviously, $S=1\darrow$.
\end{proof}

Hence one needs to sacrifice the top element when passing to the noncommutative setting. (Alternatively, one could deal with order-duals of strongly distributive skew lattices and keep $1$ but sacrifice $0$. The latter approach is reasonable when logic is considered, and was carried out in \cite{SHA}.)

A \emph{noncommutative Heyting algebra} is an algebra $(S;  \land, \lor, \rightarrow, 0, t)$ where $(S; \land, \lor, 0)$ is a strongly distributive skew lattice with bottom $0$ and a top $\DD$-class $T$, $t$ is a distinguished element of $T$ and $\rightarrow$ is a binary operation  that satisfies the following axioms:
\begin{itemize}
\item[(NH1)] $x\rightarrow y = (y\lor(t\land  x\land t)\lor y)\rightarrow y$,
\item[(NH2)] $x\rightarrow x = x\lor t\lor x$,
\item[(NH3)]  $x\land (x\rightarrow y) \land x = x\land y \land x$, 
  \item[(NH4)] $y\land (x\rightarrow y)=y$ and  $(x\rightarrow y)\land y=y$,
  \item[(NH5)] $x \rightarrow (t\land (y\land z)\land t)=
  ( x\rightarrow (t\land y\land t)) \land (x \rightarrow (t \land  z\land t))$.
\end{itemize}

Note that the axiom (NH4) yields $y\leq x\rightarrow y$ for all $x,y\in S$.
Noncommutative Heyting algebras form a variety because the fact that the distinguished element $t$ lies in the top $\DD$-class is characterized by $x\land t\land x=x$ (or equivalently, $t\lor x\lor t=t$).

\begin{example}
\emph{
Let $\mathcal P(A,\{0,1\})$ be the set of all partial functions from $A$ to $\{0,1\}$ where $A$ is a non-empty set.  Leech \cite{L4} defined skew lattice operations on $\mathcal P(A,\{0,1\})$ by:
\begin{eqnarray*}
f\land g=f|_{\dom f\cap \dom g}\\
f\lor g=g\cup f|_{\dom f-\dom g}\\
\end{eqnarray*}
Leech proved that  $(\mathcal P(A,\{0,1\};\land,\lor)$ is a left-handed, strongly distributive skew lattice with bottom $\emptyset$, with the maximal lattice image $\mathcal P(A,\{0,1\})/\DD$ being isomorphic to the power set of $A$. The top $\DD$-class of  $\mathcal P(A,\{0,1\})$ consists of all total functions. Denote by $\tau$ the total function defined by $\tau(x)=1$ for all $x\in A$.
 Following Le Bruyn's Example \ref{ex:lebruyn} we define the operation $\rightarrow$ on $\mathcal P(A,\{0,1\})$ by:
\[f\rightarrow g=g\cup \tau|_{A-(\dom f\cup \dom g)}.
\]
We claim that  $(\mathcal P(A,\{0,1\};\land,\lor,\rightarrow,\emptyset,\tau)$ is a noncommutative Heyting algebra.
}

\noindent \emph{ (NH1): $f\rightarrow g=(g\lor (\tau\land f\land \tau)\lor g)\rightarrow g$ because both sides of the equality reduce to $g\cup \tau|_{A-(\dom f\cup \dom g)}$.
}

\noindent \emph{ (NH2): $f\rightarrow f=f\lor \tau\lor f$ because both sides of the equality reduce to
 $f\cup \tau|_{A-\dom f}$.
}

\noindent \emph{ (NH3): $f\land (f\rightarrow g)\land f=f\land g\land f$ because both sides of the equality reduce to $f|_{\dom f\cap \dom g}$.
}

\noindent \emph{ (NH4): Both $g\land (f\rightarrow g)$ and $(f\rightarrow g)\land g$  reduce to $g$.
}

\noindent \emph{ (NH5): $f\rightarrow (\tau \land g\land h\land \tau)=(f\rightarrow (\tau \land g\land \tau))\land (f\rightarrow (\tau \land h\land \tau))$ because both sides of the equality reduce to $\tau|_{(A-\dom f)\cup (\dom g\cap \dom h)}$.
}
\end{example}

By a result of \cite{L1} skew lattices are \emph{regular}  in that they satisfy $x\land y\land x\land z\land x=x\land y\land z\land x$ and $x\lor y\lor x\lor z\lor x=x\lor y\lor z\lor x$.
The following is an easy but useful consequence of regularity.

\begin{lemma}[\cite{CC}]\label{lemma:reg}
Let $S$ be a skew lattice and $x,y,u,v\in S$ s.t. $u\preceq x,y\preceq v$ holds. Then:
\begin{itemize}
\item[(i)] $x\land v\land y= x\land y$,
\item[(ii)] $x\lor u\lor y=x\lor y$.
\end{itemize}
\end{lemma}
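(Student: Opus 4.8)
The plan is to establish (i) by a direct computation from regularity, and to obtain (ii) from (i) by passing to the order-dual skew lattice $S^{\mathrm{op}}=(S;\lor,\land)$: under this duality $\land$ and $\lor$ switch and $a\preceq b$ becomes $b\preceq a$, so (ii) is exactly (i) applied in $S^{\mathrm{op}}$ with $u$ playing the former role of $v$. (The argument for (i) below uses only $x\preceq v$ and $y\preceq v$, so dually (ii) will need only $u\preceq x$ and $u\preceq y$.)

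For (i), put $a=x\land v\land y$ and $b=x\land y$; the goal is $a=b$. From $x\preceq v$ and $y\preceq v$ we have $x\land v\land x=x$ and $y\land v\land y=y$. Two applications of regularity give the identities
$x\land v\land y\land x=x\land v\land x\land y\land x=x\land y\land x$ and
$y\land x\land v\land y=y\land x\land y\land v\land y=y\land x\land y$.
Using these together with idempotency one computes
\[
a\land b=x\land v\land y\land x\land y=x\land y\land x\land y=x\land y=b,\qquad b\land a=x\land y\land x\land v\land y=x\land y\land x\land y=x\land y=b,
\]
so $b\leq a$ in the natural partial order. Next, from $x\preceq v$ one checks directly that $x\Drel(x\land v)$ (indeed $x\land(x\land v)\land x=x\land v\land x=x$ and $(x\land v)\land x\land(x\land v)=x\land v\land x\land v=x\land v$); since $\DD$ is a congruence, $a=(x\land v)\land y\Drel x\land y=b$. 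Thus $a$ and $b$ are $\DD$-related with $b\leq a$. Finally, $b\leq a$ gives $a\land b=b=b\land a$, while $a\Drel b$ gives $a\preceq b$, i.e. $a=a\land b\land a$; but $a\land b\land a=b\land a=b$, so $a=b$. This proves (i), and (ii) follows by order-duality as above.

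The point requiring care is that regularity alone delivers only $a\Drel b$, not $a=b$ — this already fails inside a rectangular $\DD$-class with more than one element — so the decisive extra ingredient is the comparability $b\leq a$, which is exactly what the two regularity identities are arranged to produce. Note that the reverse comparability $a\leq b$ is false in general, so one really must locate $b$ below $a$ and not conversely; once both $a\Drel b$ and $b\leq a$ are in hand the conclusion is forced, because the natural partial order is trivial on each $\DD$-class.
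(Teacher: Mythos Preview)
Your argument is correct. Note, however, that the paper does not actually supply a proof of this lemma: it is simply quoted from \cite{CC} with the remark that it is ``an easy but useful consequence of regularity,'' so there is no in-paper proof to compare against. Your approach---using two applications of regularity to get $b\leq a$, then observing $a\Drel b$ via the congruence property of $\DD$, and finally invoking that $\leq$ is trivial on $\DD$-classes---is exactly the sort of direct computation the paper's remark anticipates, and the reduction of (ii) to (i) by passing to the order-dual skew lattice $(S;\lor,\land)$ is the standard and cleanest way to handle the second assertion.
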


  We will make use of the following technical lemmas in the proof of Theorem \ref{th:phi-iso}.

\begin{lemma}\label{lemma:1.3}
Let  $(S;  \land, \lor, \rightarrow, 0, t)$ be a noncommutative Heyting algebra and let $x\in S$, $y,z\in t\darrow.$ Then:
\begin{itemize}
\item[(i)] $x\rightarrow t=t$,
\item[(ii)] $y\rightarrow z\in t\darrow$.
\end{itemize} 
Therefore the lattice $t\darrow$ is closed under $\rightarrow$.
\end{lemma}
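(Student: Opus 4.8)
The plan is to establish (i) by a short computation with axioms (NH1) and (NH2), and then to obtain (ii) as a formal consequence of (i) together with (NH5); the closure statement for $t\darrow$ is then immediate from (ii).

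For (i), fix $x\in S$. First I would note that $t\land x\land t\leq t$: by associativity and idempotency of $\land$ one has $(t\land x\land t)\land t=t\land x\land t=t\land(t\land x\land t)$. Since $u\leq t$ forces $u\lor t=t=t\lor u$ (by the standard skew-lattice equivalences between the $\land$- and $\lor$-identities), this gives $t\lor(t\land x\land t)\lor t=t$. Substituting $t$ for the second variable in (NH1) now yields
\[x\rightarrow t=\bigl(t\lor(t\land x\land t)\lor t\bigr)\rightarrow t=t\rightarrow t,\]
and (NH2) read at $x:=t$ gives $t\rightarrow t=t\lor t\lor t=t$. Hence $x\rightarrow t=t$. (Alternatively one could argue order-theoretically: (NH4) gives $t\leq x\rightarrow t$, so $t\preceq x\rightarrow t$; since $\DD_t=T$ is the top of $S/\DD$ this forces $x\rightarrow t\Drel t$, and the natural partial order is trivial on each $\DD$-class, so $x\rightarrow t=t$.)

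For (ii), the points to exploit are that $t\in t\darrow$ and that $t\land z\land t=z$ whenever $z\leq t$. Given $y,z\in t\darrow$, I would instantiate (NH5) twice: once with $(y,t,z)$ in place of $(x,y,z)$, and once with $(y,z,t)$ in place of $(x,y,z)$. In the first case the left side collapses to $y\rightarrow(t\land(t\land z)\land t)=y\rightarrow z$, while the right side becomes $(y\rightarrow t)\land(y\rightarrow z)$, which equals $t\land(y\rightarrow z)$ by (i); hence $y\rightarrow z=t\land(y\rightarrow z)$. The second instance gives, symmetrically, $y\rightarrow z=(y\rightarrow z)\land t$. Together these are precisely the two conditions saying $y\rightarrow z\leq t$, i.e. $y\rightarrow z\in t\darrow$. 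Finally, $t\darrow$ is a lattice: $S$ is normal by Lemma \ref{lemma:sd-equiv-def}, so Lemma \ref{lemma:normal-lattice} applies; thus (ii) shows $t\darrow$ is closed under $\rightarrow$, which is the last assertion.

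I do not anticipate a genuine difficulty: the whole lemma reduces to choosing the correct specializations of the axioms. The only place demanding care is the bookkeeping in the collapsing steps — checking $t\land x\land t\leq t$, and that $t\land(t\land z)\land t$, $t\land(z\land t)\land t$ and $t\land t\land t$ simplify to $z$, $z$ and $t$ respectively inside the axiom instances — all of which use nothing beyond idempotency, associativity, and $z\leq t$ for $z\in t\darrow$. (It is worth remarking that the argument for (ii) never uses $y\in t\darrow$, so in fact $x\rightarrow z\in t\darrow$ for every $x\in S$ and every $z\in t\darrow$.)
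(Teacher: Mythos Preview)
Your proof is correct and follows essentially the same route as the paper's: part (i) via (NH1) collapsing $t\lor(t\land x\land t)\lor t$ to $t$ and then (NH2), and part (ii) via two instantiations of (NH5) with one of the inner variables set to $t$, combined with (i). The paper carries out only one of the two instantiations explicitly and says ``similarly'' for the other, while you spell out both; your additional order-theoretic alternative for (i) and the closing observation that $y\in t\darrow$ is never used are correct bonuses not present in the paper.
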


\begin{proof}
(i) Using (NH1) we obtain: $x\rightarrow t=(t\lor (t\land x\land t)\lor t)\rightarrow t=t\rightarrow t=t$ by (NH2).

(ii) Since $z\leq t$ we obtain: $y\rightarrow z=y\rightarrow (t\land (z\land t)\land t)$. This is further equal to $y\rightarrow (t\land z\land t))\land  (y\rightarrow(t\land t\land t)$  by (NH5), which equals $(y\rightarrow z)\land  (y\rightarrow t)=(y\rightarrow z)\land t$. Similarly, we prove $t\land (y\rightarrow z)=y\rightarrow z$ and $y\rightarrow z\leq t$ follows.
\end{proof}

\begin{lemma}\label{lemma:d-cong}
Let  $(S;  \land, \lor, \rightarrow, 0, t)$ be a noncommutative Heyting algebra and $x,y\in S$. Then  $y$, $y\lor (t\land x\land t)\lor y$ and $x\rightarrow y$ all lie in the lattice $(y\lor t\lor y)\darrow$. 
\end{lemma}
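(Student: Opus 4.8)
The plan is to reduce the statement to three order inequalities in $S$. Write $w=y\lor t\lor y$. First I would note that $w$ lies in the top $\DD$-class: in the maximal lattice image $\overline w=\overline y\lor\overline t\lor\overline y=\overline t$ since $\overline t$ is greatest in $S/\DD$, so $w\Drel t$ (this can also be seen directly from regularity). A strongly distributive skew lattice is normal by Lemma~\ref{lemma:sd-equiv-def}, so Lemma~\ref{lemma:normal-lattice} gives that $w\darrow$ is a lattice (in fact a lattice section, as $w\in T$). Hence it suffices to prove $y\le w$, $\;b:=y\lor(t\land x\land t)\lor y\le w$, and $x\rightarrow y\le w$. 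The first is immediate: $y\lor w=y\lor t\lor y=w=w\lor y$ by idempotency and associativity. For the second, put $v=t\land x\land t$; then $v\land t=v=t\land v$, so $v\le t$ and $v\lor t=t=t\lor v$, and by regularity of $\lor$ (the identity $p\lor q\lor p\lor r\lor p=p\lor q\lor r\lor p$) one gets $b\lor w=y\lor v\lor y\lor t\lor y=y\lor v\lor t\lor y=y\lor t\lor y=w$ and symmetrically $w\lor b=w$, so $b\le w$; the same computation gives $y\le b$, to be used below.

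The real content is $x\rightarrow y\le w$. For any $a,w\in S$ one has $a\le w$ iff $a=w\land a\land w$, and expanding $w\land(x\rightarrow y)\land w$ by strong distributivity and using (NH4) (so $y\land(x\rightarrow y)=y$) together with the absorption laws one computes
\[
w\land(x\rightarrow y)\land w \;=\; y\lor\bigl(t\land(x\rightarrow y)\land t\bigr)\lor y .
\]
So it is enough to show $x\rightarrow y = y\lor\bigl(t\land(x\rightarrow y)\land t\bigr)\lor y$. I would reduce the left side via (NH1) to $x\rightarrow y=b\rightarrow y$ (recall $y\le b\le w$) and note that (NH2) with the regularity computation above yields $b\rightarrow b=b\lor t\lor b=w$, so the target becomes the restricted monotonicity $b\rightarrow y\le b\rightarrow b$. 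Monotonicity of $\rightarrow$ comes from (NH5): since $t\land(p\land q)\land t=(t\land p\land t)\land(t\land q\land t)$ for all $p,q$ by regularity of $\land$, (NH5) reads $x\rightarrow\bigl((t\land p\land t)\land(t\land q\land t)\bigr)=(x\rightarrow(t\land p\land t))\land(x\rightarrow(t\land q\land t))$, and feeding in comparable elements $r\le s$ of $t\darrow$ (so $t\land r\land t=r$, $t\land s\land t=s$, $r\land s=r=s\land r$) gives $x\rightarrow r=(x\rightarrow r)\land(x\rightarrow s)=(x\rightarrow s)\land(x\rightarrow r)$, hence $x\rightarrow r\le x\rightarrow s$. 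One then transports $y\le b$ through the lattice section $t\darrow$: set $y^{\ast}=t\land y\land t$, $b^{\ast}=t\land b\land t$, observe $y^{\ast}\le b^{\ast}$ (meets are order preserving), $y^{\ast}\Drel y$, $b^{\ast}\Drel b$, and use Lemma~\ref{lemma:1.3}(ii) with (NH1), (NH2) to pin $b\rightarrow y^{\ast}$, $b\rightarrow b^{\ast}$ down inside $t\darrow$; combining this with (NH3) and the rectangularity of the $\DD$-class of $x\rightarrow y$ collapses $y\lor\bigl(t\land(x\rightarrow y)\land t\bigr)\lor y$ back to $x\rightarrow y$.

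The step I expect to be the main obstacle is exactly this last bridge. Axiom (NH5) only controls $\rightarrow$ on arguments of the form $t\land(\cdot)\land t$, i.e.\ on the representatives in the lattice section $t\darrow$, whereas the inequality actually needed, $b\rightarrow y\le b\rightarrow b$, concerns $y$ and $b$ themselves, which lie in $w\darrow$ but not in general in $t\darrow$; equivalently one must show that $x\rightarrow y$ is the unique element of its $\DD$-class lying below $w$, i.e.\ that $x\rightarrow y = y\lor\bigl(t\land(x\rightarrow y)\land t\bigr)\lor y$. Making this rigorous is where (NH1), (NH2), (NH3), (NH5), Lemma~\ref{lemma:1.3} and the structure of $\DD$-classes must all be used together; once it is done, $x\rightarrow y=w\land(x\rightarrow y)\land w\le w$ and the three inclusions are established.
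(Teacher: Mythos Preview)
Your arguments for $y\le w$ and for $b:=y\lor(t\land x\land t)\lor y\le w$ are correct and essentially identical to the paper's: absorption for the first, and regularity (Lemma~\ref{lemma:reg}) plus $(t\land x\land t)\lor t=t$ for the second.

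For $x\rightarrow y\le w$, however, you take a much longer route than the paper and do not close the gap you yourself flag. The paper's argument is two lines and uses only (NH4): from $y\le x\rightarrow y$ one has $(x\rightarrow y)\lor y=x\rightarrow y=y\lor(x\rightarrow y)$, and then one verifies directly that
\[
(x\rightarrow y)\lor w=((x\rightarrow y)\lor y)\lor t\lor y=y\lor t\lor y
\qquad\text{and}\qquad
w\lor(x\rightarrow y)=y\lor t\lor(y\lor(x\rightarrow y))=y\lor t\lor y,
\]
so $x\rightarrow y\le w$. No expansion of $w\land(x\rightarrow y)\land w$, no (NH1), (NH2), (NH3), (NH5), no detour through $t\darrow$. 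In particular, the equality you set as your target, $x\rightarrow y=y\lor(t\land(x\rightarrow y)\land t)\lor y$, is essentially the \emph{conclusion} of the last part of Theorem~\ref{th:imp-imp-t} (whose proof invokes the present lemma), so trying to establish it here is going in the wrong direction.

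Thus your proposal has a genuine gap exactly where you say it does. You reduce $x\rightarrow y\le w$ to the identity $x\rightarrow y=y\lor(t\land(x\rightarrow y)\land t)\lor y$ and then try to recover this from a monotonicity statement in $t\darrow$ obtained via (NH5); but the passage from $b\rightarrow y^{\ast}\le b\rightarrow b^{\ast}$ (an inequality between elements of $t\darrow$) back to the needed equality in $w\darrow$ is never carried out, and the ``collapsing'' step invoking (NH3) and rectangularity is only a gesture, not an argument. The paper avoids all of this by working with $\lor$ and (NH4) alone; you should do the same.
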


\begin{proof}
Denote $t'=y\lor t\lor y$.
The absorption yields $y \lor (y\lor t\lor y)=y\lor t\lor y$ and likewise $(y\lor x\lor y)\lor y=y$. Hence $y\leq y\lor t\lor y$  and thus $y\in t'\darrow$.

Similarly, $(x\rightarrow y) \lor (y\lor t\lor y) =((x\rightarrow y)\lor y )\lor t\lor y=y\lor t\lor y$ since $y\leq (x\rightarrow y)$ by axiom (NH4). Together with $(y\lor t\lor y)\lor (x\rightarrow y)=y\lor t\lor y$ this yields  $x\rightarrow y\leq t'$. 

It remains to be proved that $y\lor (t\land x\land t)\lor y\leq t'$.  Using Lemma \ref{lemma:reg}: 
$(y\lor (t\land x\land t)\lor y)\lor t'=y\lor (t\land x\land t)\lor y\lor y\lor t\lor y=y\lor (t\land x\land t)\lor t\lor y$. By absorption this  equals $y\lor t\lor y=t'$. Likewise we prove that $t'\lor (y\lor (t\land x\land t)\lor y)=t'$ and $y\lor (t\land x\land t)\lor y\leq t'$ follows. 
\end{proof}

\begin{theorem}\label{th:phi-iso}
Let $(S;  \land, \lor, \rightarrow, 0, t)$ be a noncommutative Heyting algebra. Then:
\begin{itemize}
\item[(i)] $(t\darrow; \land, \lor, \rightarrow, 0, t)$ is  a Heyting algebra. 
\item[(ii)] Given any $t'\in \DD_t$,   $(t'\darrow; \land, \lor, \rightarrow, 0, t')$ is  a Heyting algebra, the map
\[\begin{array}{rcl}
\varphi: t\darrow & \to & t'\darrow \\
x &\mapsto & t'\land x\land t'
\end{array}
\]
is an isomorphism of Heyting algebras and $x\,\DD\,\varphi (x)$ holds for all $x\in t\darrow$.
\item[(iii)] Green's relation $\DD$ is a congruence on $S$ and the maximal lattice image $S/\DD$ is a Heyting algebra isomorphic to $t\darrow$.
\end{itemize}
\end{theorem}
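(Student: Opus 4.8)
The plan is to prove the three parts in the order stated, since each feeds the next. For (i), I would verify that $(t\darrow;\land,\lor,0,t)$ is a bounded distributive lattice: it is a lattice by Lemma~\ref{lemma:normal-lattice} (a noncommutative Heyting algebra is strongly distributive, hence normal), it is distributive because $t\darrow$ is a sublattice of $S$ and strong distributivity restricts to the ordinary distributive laws on commuting elements, and $0$ and $t$ are its bottom and top. By Lemma~\ref{lemma:1.3}(ii) the set $t\darrow$ is closed under $\rightarrow$, so $\rightarrow$ is a genuine binary operation on $t\darrow$. Then I would check the Heyting axioms (H1)--(H4) for elements $x,y,z\in t\darrow$: (H1) is Lemma~\ref{lemma:1.3} combined with (NH2), which for $x\le t$ gives $x\rightarrow x=x\lor t\lor x=t=1$; (H2) follows from (NH3) since on $t\darrow$ the meet is commutative, so $x\land(x\rightarrow y)=x\land(x\rightarrow y)\land x=x\land y\land x=x\land y$; (H3) is the first half of (NH4); and (H4) is (NH5) rewritten using $y,z\le t$ so that $t\land(y\land z)\land t=y\land z$, $t\land y\land t=y$, $t\land z\land t=z$.

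For (ii), the same argument as in (i) shows each $t'\darrow$ is a Heyting algebra (note $t'\darrow$ is a lattice section of $S$ since $S$ is normal with top $\DD$-class, as recalled in the preliminaries). The map $\varphi(x)=t'\land x\land t'$ lands in $t'\darrow$ and $x\Drel\varphi(x)$ because $x$ and $t'\land x\land t'$ are both in the rectangular $\DD$-class computations — more precisely $\varphi(x)\preceq x$ is clear and $x\preceq\varphi(x)$ follows from $x\le t$, $t\Drel t'$ and regularity, so $\varphi(x)\Drel x$. That $\varphi$ is a bijection: its candidate inverse is $x'\mapsto t\land x'\land t$, and the composites reduce to the identity using Lemma~\ref{lemma:reg}(i) with the bounds $0\preceq x,t'\land x\land t'\preceq$ (suitable elements) — e.g. $t\land(t'\land x\land t')\land t=t\land x\land t=x$ for $x\in t\darrow$, since $x\le t$. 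That $\varphi$ preserves $\land$ and $\lor$ is the standard fact that conjugation by an element of a $\DD$-class is an isomorphism between the corresponding lattice sections of a normal skew lattice; I would cite or reprove this via regularity. Preservation of $0$ and $t$ (mapped to $0$ and $t'$) is immediate. Preservation of $\rightarrow$ is the crux: I must show $t'\land(x\rightarrow y)\land t'=(t'\land x\land t')\rightarrow(t'\land y\land t')$ for $x,y\in t\darrow$, which I would extract from the axioms — using (NH1) to replace the left argument of $\rightarrow$ by $y\lor(t\land x\land t)\lor y$ and analogous normal-form reductions, together with Lemma~\ref{lemma:d-cong} which places $x\rightarrow y$ inside $(y\lor t\lor y)\darrow$.

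For (iii), I would first establish that $\DD$ is a congruence with respect to $\rightarrow$ (it is already a congruence for $\land,\lor$ by Leech's first decomposition theorem): given $x_1\Drel x_2$ and $y_1\Drel y_2$, axiom (NH1) shows $x_i\rightarrow y_i$ depends on $x_i$ only through $t\land x_i\land t$ and on $y_i$ only up to the $\DD$-class, so $x_1\rightarrow y_1\Drel x_2\rightarrow y_2$ — here I would use that $x_1\Drel x_2$ implies $t\land x_1\land t\Drel t\land x_2\land t$ and then that these two lattice-section elements become literally comparable after the outer $y\lor(-)\lor y$, invoking Lemma~\ref{lemma:d-cong} and regularity. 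Given that $\DD$ is a congruence, $S/\DD$ inherits operations $\land,\lor,\rightarrow,0,t$, and the composite $t\darrow\hookrightarrow S\twoheadrightarrow S/\DD$ is a bijection (it meets every $\DD$-class exactly once, being a lattice section) and a homomorphism, hence an isomorphism; transporting the Heyting structure from part (i) shows $S/\DD$ is a Heyting algebra. The main obstacle I anticipate is the $\rightarrow$-compatibility in parts (ii) and (iii): unlike $\land$ and $\lor$, the operation $\rightarrow$ is only constrained by the somewhat indirect axioms (NH1)--(NH5), so showing that conjugation by $t'$ intertwines the $\rightarrow$'s, and that $\DD$ respects $\rightarrow$, will require careful bookkeeping with (NH1) and Lemmas~\ref{lemma:reg} and~\ref{lemma:d-cong} rather than a one-line appeal to strong distributivity.
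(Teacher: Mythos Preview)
Your plan for (i) is correct and matches the paper's argument essentially verbatim.

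For (ii), the paper takes a shorter route than you: rather than directly verifying $\varphi(x\to y)=\varphi(x)\to\varphi(y)$ from the axioms, it observes that $\varphi$ is a lattice isomorphism (citing Leech) with inverse $\psi(y)=t\land y\land t$, and that any lattice isomorphism between Heyting algebras automatically preserves the implication, since $\to$ is determined by the order via the adjunction (HA). Your direct-verification approach is valid in principle but more laborious; the order-theoretic observation saves the calculation entirely.

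For (iii), your plan has a real gap. You correctly note that (NH1) makes $x\to y$ depend on $x$ only through $t\land x\land t$, so the $x$-side of $\DD$-compatibility is easy (indeed $t\land x_1\land t=t\land x_2\land t$ since both lie in the lattice section $t\darrow$ and are $\DD$-related). But the assertion that $x\to y$ depends ``on $y_i$ only up to the $\DD$-class'' is exactly what must be proved and is not a consequence of (NH1): after the (NH1) reduction you still need
\[
(y_1\lor a\lor y_1)\to y_1 \ \Drel\ (y_2\lor a\lor y_2)\to y_2 \qquad\text{for }y_1\Drel y_2,
\]
and your sketch (``these two lattice-section elements become literally comparable after the outer $y\lor(-)\lor y$'') does not lead there---the two sides live in \emph{different} lattice sections $(y_1\lor t\lor y_1)\darrow$ and $(y_2\lor t\lor y_2)\darrow$, so no direct comparison is available. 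The paper's key move, which you are missing, is to \emph{use (ii)}: it applies the Heyting isomorphism $\rho:(y\lor t\lor y)\darrow\to(v\lor t\lor v)\darrow$ furnished by part (ii), checks $\rho(y)=v$, and then computes $\rho(x\to y)$ inside the single Heyting algebra $(v\lor t\lor v)\darrow$, obtaining
\[
\rho(x\to y)=\rho(x\to y)\land(u\to v)\land\rho(x\to y),
\]
hence $\rho(x\to y)\preceq u\to v$; the reverse preorder is symmetric. Since $\rho(x\to y)\Drel x\to y$ by (ii), this gives $x\to y\Drel u\to v$. So (ii) is not merely a pleasant corollary of (i)---it is the engine that drives (iii), and the order in which the paper establishes the parts is essential.
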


\begin{proof}
(i) By a result of \cite{L4}  $t\darrow$ is a bounded distributive lattice, and by Lemma \ref{lemma:1.3} it is closed under $\rightarrow$. Since all elements of $t\darrow$ commute, given $x,y,u\in t\darrow$ the axioms (NH1) and (NH3)--(NH5) translate to the standard set of axioms of a Heyting algebra, while (NH2) translates to $(x\lor y)\rightarrow y=x\rightarrow y$, which follows from the axioms of a Heyting algebra.

(ii) Both $t\darrow$ and $t'\darrow$ are lattice sections of $S$. The map $\varphi$ is an isomorphism of  lattices by a result of \cite{L1} with an inverse given by $\psi(y)=t\land y\land t$. Hence $\varphi$ is also an isomorphism of Heyting algebras. 
% The implication on $t\darrow$ is characterized by the propery:
%\[x\leq y\rightarrow z \text{ iff } x\land y\leq z.
%\]
% Take any $x,y,z\in t\darrow$. Then:
%\[\begin{array}{rcl}
%x\leq y\rightarrow z & \text{iff} & x\land y\leq z \\
%&  \text{iff} & \varphi(x)\land \varphi(y)\leq \varphi(z) \\
%&  \text{iff} &  \varphi(x)\leq \varphi(y)\rightarrow \varphi(z).
%\end{array}
%\]
%Hence $\varphi(y\rightarrow z)=\varphi(y)\rightarrow \varphi(z)$ and $\varphi$ is a Heyting algebra isomorphism.
 Finally, $x\land \varphi (x)\land x=x\land t\land x\land t\land x=x$ by Lemma \ref{lemma:reg}, and likewise  $\varphi(x)\land x\land \varphi(x)=t'\land x\land t'\land x\land t'\land x\land t'=t'\land x\land t'=\varphi (x)$, which yields $x\,\DD\, \varphi (x)$.

(iii) By Leech's first decomposition theorem, $\DD$ is a congruence on any skew lattice. In order to prove that $\DD$ is a noncommutative Heyting algebra congruence, it remains to prove that it is compatible with $\rightarrow$. Assume $x\,\DD\, u$ and $y\,\DD \,v$. We need to prove that $x\rightarrow y\,\DD \,u\rightarrow v$. By Lemma \ref{lemma:d-cong}, $x\rightarrow y$ is an element of the Heyting algebra $(y\lor t\lor y)\darrow$ and  $u\rightarrow v$ is an element of the Heyting algebra $(v\lor t\lor v)\darrow$. We may assume that $y\leq x$, $x\in (y\lor t\lor y)\darrow$, $v\leq u$ and $u\in (v\lor t\lor v)\darrow$ (otherwise we replace $x$ by $y\lor (t\land x\land t)\lor y$ and $u$ by $v\lor (t\land u\land t)\lor v$). The map 
\[\begin{array}{rcl}
\rho: (y\lor t\lor y)\darrow & \to & (v\lor t\lor v)\darrow \\
z &\mapsto &  (v\lor t\lor v)\land z\land  (v\lor t\lor v)
\end{array}
\]
is an isomorphism of Heyting algebras by (ii), which we have already proved. We claim that $\rho(y)=v$.  Indeed,
\[\begin{array}{rcl}
\rho(y) & = & (v\lor t\lor v)\land y\land (v\lor t\lor v) \\
& = & (v\land y\land v)\lor (t\land y\land t)\lor (v\land y\land v)\\
& = & v\land y\land v=v,
\end{array}
\]
where we used strong distributivity and the fact that the elements $y$, $v$ and $t\land y\land t$ are all $\DD$-equivalent. Next we prove that $\rho (x\rightarrow y)\,\DD\, u\rightarrow y$. Denoting $\tau=v\lor t\lor v$, using the fact that $u$, $v$, $\rho(x)$ all lie in the Heyting algebra $\tau\darrow$  and that $\tau$ is the top element of $\tau\darrow$, we obtain:
 \[\begin{array}{rcl}
\rho(x\rightarrow y) & = & \rho(x)\rightarrow \rho(y) \\
& = &  \rho(x) \rightarrow v \\
& = & (v\lor(t\land  \rho(x)\land t)\lor v)   \rightarrow v  \text{ (by (NH1))}\\
& = &((v\lor t\lor v)\land (v\lor  \rho(x)\lor v)\land (v\lor t\lor v))   \rightarrow v  \text{ (by }\eqref{D2} \text{)} \\
& = &(\tau\land  (v\lor \rho(x)\lor v)\land \tau)   \rightarrow v\\
& = & (v\lor \rho(x) \lor  u\lor \rho(x) \lor v)   \rightarrow v \text{ (since } \varphi(x)\,\DD\, x\,\DD\, u\text{)} \\
& = & (v\rightarrow v) \land (\rho(x)\rightarrow v)\land (u\rightarrow v)\land (\rho(x)\rightarrow v)\land (v\rightarrow v) \text{ (in }\tau\darrow \text{)}\\
& = &  (\rho(x)\rightarrow v)\land (u\rightarrow v)\land (\rho(x)\rightarrow v)\\
& = &  \rho(x\rightarrow y) \land (u\rightarrow v)\land \rho(x\rightarrow y).\\
\end{array}
\]
That proves $\rho(x\rightarrow y)\preceq u\rightarrow v.$
Similarly we prove $u\rightarrow v\preceq \rho(x\rightarrow y)$, and $\rho (x\rightarrow y)\,\DD\, u\rightarrow y$ follows. It follows that $S/\DD$ is a Heyting algebra. It is isomorphic to $t\darrow$ as the maximal lattice image of a skew lattice is always isomorphic to any of its lattice sections.
\end{proof}

Note that given $x, y\in S$, where  $(S;\land,\lor,\rightarrow,0,t)$ is a noncommutative Heyting algebra, the element $x\rightarrow y$ equals $(y\lor (t\land x\land t)\lor y)\rightarrow y$ where the latter can be seen as the implication computed in the Heyting algebra $(y\lor t\lor y)\darrow.$

\begin{lemma}\label{lemma:normal}
Let $S$ be a normal skew lattice and let $A>B$ be comparable $\DD$-classes in $S$. Then given any $a\in A$ there exists a unique $b\in B$ s.t. $b\leq a$.
\end{lemma}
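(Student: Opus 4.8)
The plan is to establish existence first and then uniqueness, using the characterization of normality from Lemma \ref{lemma:normal-lattice}. For existence, fix $a \in A$ and pick any element $c \in B$. Since $A > B$ as $\DD$-classes, we have $c \preceq a$, so by the definition of the natural preorder $a \land c \land a = c$. The natural candidate is $b := a \land c \land a$; but to conclude $b \le a$ (in the natural \emph{partial} order, not just the preorder) I need both $b \land a = b$ and $a \land b = b$. Here I would invoke normality: by Lemma \ref{lemma:normal-lattice}, $a\darrow$ is a lattice, and in particular it is a commutative subalgebra. A cleaner route: set $b := a \land c \land a$, observe $b \,\DD\, c$ (since $b \preceq c \preceq b$, the latter because $a \land c \land a$ absorbs appropriately), hence $b \in B$, and then check directly that $a \land b = a \land (a \land c \land a) = a \land c \land a = b$ by idempotency and associativity, and similarly $b \land a = b$. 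Thus $b \le a$ and $b \in B$, giving existence.

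For uniqueness, suppose $b_1, b_2 \in B$ both satisfy $b_i \le a$. Then $b_1, b_2 \in a\darrow$, which is a lattice by Lemma \ref{lemma:normal-lattice}; in particular $b_1$ and $b_2$ commute and $a\darrow$ is a (commutative) lattice. Since $b_1, b_2$ lie in the same $\DD$-class $B$ and $\DD$ restricted to a lattice is trivial (a lattice has singleton $\DD$-classes, because in a commutative skew lattice $x \preceq y$ and $y \preceq x$ force $x = x \land y \land x = x \land y$ and symmetrically $y = y \land x$, whence $x \land y = x = y$). Therefore $b_1 = b_2$.

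The main obstacle I anticipate is the passage from the natural \emph{preorder} (which is all that comparability of $\DD$-classes directly gives) to the natural \emph{partial order} needed in the statement ``$b \le a$'': an arbitrary $c \in B$ satisfies $c \preceq a$ but generally not $c \le a$, and one must massage $c$ via $a \land c \land a$ to land below $a$. Verifying that this massaged element genuinely lies below $a$ in the partial order — i.e. that both one-sided meets with $a$ return it — is where I would be most careful, and it is exactly the point where normality (equivalently, the lattice structure of $a\darrow$) does the real work, both for well-definedness of the construction and for collapsing any ambiguity in the choice of $c$.
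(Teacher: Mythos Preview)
Your proof is correct. The existence argument matches the paper's exactly: set $b = a \land c \land a$ for any $c \in B$ and verify $a \land b = b = b \land a$ directly from associativity and idempotency. One small slip: $c \preceq a$ means $c \land a \land c = c$, not $a \land c \land a = c$ as you wrote; fortunately you never use this, since you correctly treat $b = a \land c \land a$ as a possibly new element and argue $b \Drel c$ separately. Note also that normality plays no role in the existence step --- $b \leq a$ follows from idempotency and associativity alone --- so your closing remark overstates where normality enters.

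For uniqueness you take a genuinely different route from the paper. You invoke Lemma~\ref{lemma:normal-lattice}: both $b_1, b_2$ lie in $a\darrow$, which is a (commutative) lattice, and $\Drel$-equivalent elements of a lattice must coincide. The paper instead computes directly with the normality identity $x \land y \land z \land x = x \land z \land y \land x$:
\[
b = b \land b' \land b = a \land b \land b' \land b \land a = a \land b' \land b \land b' \land a = a \land b' \land a = b'.
\]
Both arguments are short. Yours is more conceptual --- it explains \emph{why} uniqueness holds, via the lattice structure of $a\darrow$ --- while the paper's is self-contained and avoids the appeal to Lemma~\ref{lemma:normal-lattice}.
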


\begin{proof} 
Take any $x\in B$ and let $b=a\land x\land a$. Then $b\land a=a\land x\land a\land a=a\land x\land a=b$ and $a\land b=a\land a\land x\land a=a\land x\land a=b$. Hence $b\leq a$. Assume that $b'\in B$ also satisfies $b'\leq a.$ Using the idempotency and the normality we obtain:
\[b=b\land b'\land b=a\land b\land b'\land b\land a=a\land b'\land b\land b'\land a=a\land b'\land a=b'.
\]
\end{proof}

\begin{theorem}\label{th:imp-imp-t}
Let $S$ be a strongly distributive skew lattice with $0$ such that $S/\DD$ is a Heyting algebra. Then $S$ has a top $\DD$-class $T$. Given an element $t\in T$ define a binary operation $\rightarrow_t$  by
\[x\rightarrow_t y=y\lor u\lor y,
\]
where $u$ is the single element of the $\DD$-class $\DD_x\rightarrow \DD_y$ that lies below $t$ w.r.t. natural partial order. Then $\rightarrow_t$ satisfies the axioms (NH1)--(NH5) above. Therefore, $(S; \land, \lor,\rightarrow_t,0,t)$ is a noncommutative Heyting algebra.

On the other hand, if  $(S;\land,\lor,\rightarrow, 0, t)$  is a noncommutative Heyting algebra then $\rightarrow=\rightarrow_t$.
\end{theorem}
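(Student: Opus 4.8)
The plan is to prove the three assertions in turn, the bulk of the work being translation between $S$ and its lattice image $S/\DD$. Since $S/\DD$ is a Heyting algebra it has a top element $1$, so the $\DD$-class $T$ mapping to $1$ under the quotient homomorphism $\pi: S\to S/\DD$ is a top $\DD$-class. As $S$ is strongly distributive it is normal (Lemma~\ref{lemma:sd-equiv-def}), so for $t\in T$ the down-set $t\darrow$ is a lattice section (as recalled in Section~2), and $\pi$ restricts to a lattice isomorphism $t\darrow\to S/\DD$; write $\sigma_t$ for its inverse, so $\sigma_t(C)$ is the unique element of a $\DD$-class $C$ lying below $t$ (Lemma~\ref{lemma:normal}). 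Then $\rightarrow_t$ is well defined, with $x\rightarrow_t y=y\lor u\lor y$ and $u=\sigma_t(\DD_x\rightarrow\DD_y)$, and I record that $\DD_u=\DD_x\rightarrow\DD_y$, that $\DD_y\leq\DD_u$ (by (H3)), hence $y\preceq u$, and that $y\leq y\lor u\lor y$ by absorption.

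To verify (NH1)--(NH5) I would push each identity down to the Heyting algebra $S/\DD$ and lift it back, using two facts: (a) a $\DD$-class has a unique element below a given element of a comparable larger $\DD$-class (Lemma~\ref{lemma:normal}, together with the remark that $\DD$-equivalent elements that are $\leq$-comparable are equal), and (b) $a\lor b\lor a=a$ whenever $a\Drel b$. Thus (NH2) is immediate since $\DD_x\rightarrow\DD_x=1$ forces $u=t$; (NH1) holds because $\DD_{y\lor(t\land x\land t)\lor y}=\DD_x\lor\DD_y$ and $(\DD_x\lor\DD_y)\rightarrow\DD_y=\DD_x\rightarrow\DD_y$ in $S/\DD$, so $u$ is unchanged; (NH4) is exactly $y\leq y\lor u\lor y$; for (NH3) strong distributivity turns $x\land(y\lor u\lor y)\land x$ into $(x\land y\land x)\lor(x\land u\land x)\lor(x\land y\land x)$, and $\DD_{x\land u\land x}=\DD_x\land(\DD_x\rightarrow\DD_y)=\DD_x\land\DD_y=\DD_{x\land y\land x}$ lets (b) finish it; and (NH5) collapses, since $t\land(y\land z)\land t$, $t\land y\land t$, $t\land z\land t$ and the relevant $u$'s all lie in the lattice $t\darrow$, to $\sigma_t(\DD_x\rightarrow(\DD_y\land\DD_z))=\sigma_t(\DD_x\rightarrow\DD_y)\land\sigma_t(\DD_x\rightarrow\DD_z)$, which is (H4) transported across the lattice isomorphism $\sigma_t$. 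This makes $(S;\land,\lor,\rightarrow_t,0,t)$ a noncommutative Heyting algebra.

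For the converse, let $(S;\land,\lor,\rightarrow,0,t)$ be a noncommutative Heyting algebra. By Theorem~\ref{th:phi-iso}(iii), $\DD$ is a congruence compatible with $\rightarrow$ and $S/\DD$ is a Heyting algebra, and since a bounded distributive lattice carries at most one Heyting implication we get $\DD_{x\rightarrow y}=\DD_x\rightarrow\DD_y$. By Lemma~\ref{lemma:d-cong}, $x\rightarrow y$ lies in the lattice $(y\lor t\lor y)\darrow$. I would then check directly that $x\rightarrow_t y=y\lor u\lor y$ also lies in $(y\lor t\lor y)\darrow$: since $u\leq t$ and $y\preceq u$, regularity gives $(y\lor u\lor y)\lor(y\lor t\lor y)=y\lor u\lor t\lor y=y\lor t\lor y$ and symmetrically, so $y\lor u\lor y\leq y\lor t\lor y$; moreover $\DD_{x\rightarrow_t y}=\DD_y\lor\DD_u\lor\DD_y=\DD_u=\DD_x\rightarrow\DD_y$. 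Hence $x\rightarrow y$ and $x\rightarrow_t y$ are two elements of the lattice section $(y\lor t\lor y)\darrow$ with equal $\DD$-class, so they coincide.

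I expect the most delicate step to be the verification of (NH1)--(NH5), i.e. Part 2: the bookkeeping of deciding, at each step, which lattice section an element lives in and which uniqueness statement applies, with (NH3) and (NH5) needing strong distributivity, the Heyting axioms of $S/\DD$, and fact (b) combined carefully. Once the isomorphism $\sigma_t$ and Lemma~\ref{lemma:normal} are set up cleanly, each individual axiom reduces to a short computation.
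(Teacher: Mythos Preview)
Your proposal is correct and follows essentially the same strategy as the paper's proof: establish the top $\DD$-class via the top of $S/\DD$, verify (NH1)--(NH5) by pushing each identity down to the Heyting algebra $S/\DD$ and lifting back through the lattice section $t\darrow$ using normality/uniqueness (Lemma~\ref{lemma:normal}), and handle the converse via Lemma~\ref{lemma:d-cong} and Theorem~\ref{th:phi-iso}(iii). Your treatment of (NH5) and of the converse is slightly more streamlined than the paper's --- you observe that all relevant elements already lie in a single lattice section and invoke uniqueness there, whereas the paper carries out explicit distributivity computations --- but the underlying ideas are the same.
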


\begin{proof}
Denote by $\varphi:S\to S/\DD$ the homomorphism that sends $x$ to its $\DD$-class $\DD_x$. By the assumption $S/\DD$ is a Heyting algebra. Hence $T=\varphi ^{-1}(1)$ is a top $\DD$-class in $S$. Let $t\in T$ be fixed. Given $x,y\in S$ Lemma \ref{lemma:normal} yields the existence of a unique $u\in \DD_x\rightarrow \DD_y$ with the property $u\leq t$. We need to verify that the operation $\rightarrow_t$ as defined above satisfies (NH1)--(NH5).

(NH1). We have $x\rightarrow_t y=y\lor u\lor y$ but also $(y\lor (t\land x\land t)\lor y)\rightarrow_t y=y\lor u\lor y$ since $(y\lor (t\land x\land t)\lor y)\,\DD\, x\lor y$ and  $\DD_x\rightarrow \DD_y=(\DD_x\lor \DD_y)\rightarrow \DD_y$.

(NH2). $\DD_x\rightarrow \DD_x=T$ and obviously $u= t$ is the element of $T$ that is below $t$ w.r.t the natural partial order. Thus $x\rightarrow_t x=x\lor t\lor x$.

(NH3). We compute:
\begin{equation}\label{eq:imp-u}
x\land (x\rightarrow_t y)\land x=x\land (y\lor u\lor y)\land x=(x\land y\land x)\lor (x\land u\land x)\lor (x\land y\land x).
\end{equation}
In the Heyting algebra $S/\DD$ we have $\DD_x \land (\DD_x\rightarrow \DD_y)=\DD_x\land \DD_y$. That implies $x\land u\land x\,\DD\, x\land y\land x$ in $S$ and thus the expression \eqref{eq:imp-u} equals $x\land y\land x$.

(NH4). $y\land (x\rightarrow_t y)=y\land (y\lor u\lor y)$ which equals $y$ by the absorption. Likewise, $(x\rightarrow y)\land y=y$.

(NH5). Denote by $u, v$ and $w$ the elements below $t$ w.r.t. the natural partial order that lie in the $\DD$-classes $\DD_x\rightarrow (\DD_y\land \DD_z)$, $\DD_x\rightarrow \DD_y$ and $\DD_x\rightarrow \DD_z$, respectively. Computing in the lattice  $t\darrow$ yields $u=v\land w$. Thus:
\begin{eqnarray*}
x\rightarrow_t(t\land ( y\land z)\land t) = (t\land  y\land z\land t)\lor u\lor (t\land y\land z\land t)\\
= (t\land  y\land z\land t)\lor (v\land w)\lor (t\land y\land z\land t)\\
 =[ (t\land  y\land z\land t)\lor v\lor (t\land y\land z\land t)] \land [(t\land  y\land z\land t)\lor w\lor (t\land y\land z\land t)]\\
=[ (t\land  y\land t)\lor v\lor (t\land y\land  t)] \land [(t\land  z\land t)\lor w\lor (t\land z\land t)]\\
=  ( x\rightarrow_t (t\land y\land t)) \land (x \rightarrow_t (t \land  z\land t)),
\end{eqnarray*}
where the fourth equality follows because both $ [(t\land  y\land z\land t)\lor v\lor (t\land y\land z\land t)] \land [(t\land  y\land z\land t)\lor w\lor (t\land y\land z\land t)]$ and $ [(t\land  y\land t)\lor v\lor (t\land y\land  t)] \land [(t\land  z\land t)\lor w\lor (t\land z\land t)]$ are  elements of the $\DD$-class $\DD_y\land \DD_z$ that are below $t$ w.r.t. the natural partial order.

To prove the final assertion we show that given a noncommutative Heyting algebra $(S;\land,\lor,\rightarrow,0,t)$ and $x,y\in S$ the element $x\rightarrow y$ equals $y\lor u\lor y$ where $u\in \DD_{x\rightarrow y}$ s.t. $u\leq t$. We have seen that (NH4) implies $y\leq x\rightarrow y$. Moreover,  $u$ equals $t\land (x\rightarrow y)\land t$  by Lemma \ref{lemma:normal}. We obtain:
\[
\begin{array}{rcl}
y\lor u\lor y & = & y\lor (t\land (x\rightarrow y)\land t)\lor y\\
 & =& (y\lor t\lor y) \land (y\lor (x\rightarrow y)\lor y)\land (y\lor t\lor y) \\
 & = & (y\lor t\lor y)\land (x\rightarrow y)\land (y\lor t\lor y)\\
& = &
x\rightarrow y,
\end{array}
\]
where the final equality follows by Lemma \ref{lemma:d-cong}.
\end{proof}

\section{Noncommutative frames}

A symmetric skew lattice is said to be \emph{join complete} if all commuting subsets have suprema in the natural partial ordering. If $S$ is a  join complete skew lattice then $S$ has a bottom element which is obtained as the join of the empty set. By a result of Leech \cite{L2} a join complete skew lattice always has a maximal $\mathcal D$-class $T$. 

\begin{proposition}
Let $S$ be a strongly distributive, join complete skew lattice. Then:
\begin{itemize}
\item[(i)] $S$ has a bottom $0$.
 \item[(ii)] Given any $u\in S$ the set $u\darrow =\{x\in S\;|\; x\leq u\}$ is a lattice.
\item[(iii)] Given any $t$ in the top $\DD$-class $T$ the lattice $t\darrow =\{x\in S\;|\; x\leq t\}$ is a lattice section of $S$.
\end{itemize}
\end{proposition}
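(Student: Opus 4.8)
The three parts are consequences of structural facts already in hand, so the plan is to reduce each to an earlier result rather than argue from scratch. The common thread is that a strongly distributive skew lattice is both symmetric and normal by Lemma~\ref{lemma:sd-equiv-def}: symmetry is what makes the hypothesis of join completeness meaningful for $S$ in the first place, while normality drives parts (ii) and (iii).

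For (i), the empty set is (vacuously) a commuting subset of $S$, so join completeness provides a supremum of $\emptyset$ in the natural partial order; a supremum of $\emptyset$ is by definition a least upper bound of $\emptyset$, hence a least element $0$ of $(S,\leq)$. It then suffices to unwind $0\leq x$ through the definition of the natural partial order to get $x\land 0 = 0 = 0\land x$ and $x\lor 0 = x = 0\lor x$ for all $x$, which is exactly the defining property of a bottom element; this is the remark made just before the statement, which I would simply spell out. For (ii), normality of $S$ combined with Lemma~\ref{lemma:normal-lattice} gives at once that $u\darrow$ is a lattice for every $u\in S$.

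For (iii), I would first invoke the result of Leech~\cite{L2} quoted just above, that a join complete skew lattice has a top $\DD$-class $T$. Since $S$ is also normal, the fact recorded in Section~2 --- that in a normal skew lattice with top $\DD$-class $T$ the down-set $t\darrow$ of any $t\in T$ is a lattice section --- applies directly and completes the proof. (For reassurance one can note that $t\darrow$ is a lattice by (ii) and that by Lemma~\ref{lemma:normal} it meets every $\DD$-class below $\DD_t$, that is every $\DD$-class, in a single element.)

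I do not expect a real obstacle. The only point worth a moment's care is in (i): one must check that the order-theoretic least element furnished by join completeness of the empty set genuinely is the algebraic bottom $0$ satisfying the identities above, and this is immediate from the characterisation of the natural partial order.
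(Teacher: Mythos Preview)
Your proposal is correct and follows essentially the same route as the paper: for (i) the join of the empty commuting subset gives the bottom, for (ii) normality together with Lemma~\ref{lemma:normal-lattice} does the work, and for (iii) you invoke the Section~2 fact that in a normal skew lattice with a top $\DD$-class each $t\darrow$ is a lattice section. The paper's only cosmetic difference is that in (iii) it writes out the one-line verification directly (exhibiting $t\land x\land t$ as the element of $t\darrow\cap\DD_x$ and checking commutativity via normality) rather than citing the earlier remark.
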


\begin{proof}
(i) is trivial as $0$ is obtained as the join of the empty set.

(ii) This follows from the normality of $S$ which is equivalent to $u\darrow$ being a lattice for any $u\in S$, see \cite{L4}. 

(iii) Given $x\in S$, $t\land x\land t$ is the element that lies in the intersection of $t\darrow$ and  $\DD_x$. The commutativity of $t\darrow$ follows from the normality of $\land$: $(t\land x\land t)\land (t\land y\land t)=t\land x\land y\land t=t\land y\land x\land t=(t\land y\land t)\land (t\land x\land t)$.
\end{proof}

\begin{lemma}
Let $S$ be a strongly distributive skew lattice and $\{x_i\,|\, i\in I\}$ a commuting subset. Then given any $y\in S$ the sets $\{y\land x_i\,|\, i\in I\}$  and  $\{x_i\land y\,|\, i\in I\}$ are commuting subsets.
\end{lemma}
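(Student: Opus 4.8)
The plan is to reduce the statement to the left-handed and right-handed cases, where it becomes a two-line computation, and to invoke the subdirect decomposition of a skew lattice to glue the two cases. Since $S$ is strongly distributive it is symmetric and normal (Lemma \ref{lemma:sd-equiv-def}); normality is all that will be used. By Leech's second decomposition theorem (see \cite{L1,L3}), $S$ embeds — via the map sending an element to the pair consisting of its $\RR$-class and its $\LL$-class — as a subalgebra of the product of its maximal left-handed image $S/\RR$ and its maximal right-handed image $S/\LL$, both of which are homomorphic images of $S$ and hence again normal. An identity holding in $S/\RR$ and in $S/\LL$ holds in their product, and therefore in the subalgebra $S$. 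So it suffices to prove, in every left-handed normal and in every right-handed normal skew lattice, the two identities
\[(y\land a)\land(y\land b)=y\land a\land b,\qquad (a\land y)\land(b\land y)=a\land b\land y.\]
Granting these, they hold as identities in $S$; putting $a=x_i,b=x_j$ (resp.\ $a=x_j,b=x_i$) and using $x_i\land x_j=x_j\land x_i$ in $S$ gives $(y\land x_i)\land(y\land x_j)=y\land x_i\land x_j=y\land x_j\land x_i=(y\land x_j)\land(y\land x_i)$, so $\{y\land x_i\mid i\in I\}$ is a commuting subset, and the second identity treats $\{x_i\land y\mid i\in I\}$ the same way.

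To establish the identities, recall that left-handedness is the law $u\land v\land u=u\land v$ and right-handedness the law $u\land v\land u=v\land u$. In the left-handed case, $y\land a\land y=y\land a$ gives $(y\land a)\land(y\land b)=y\land a\land y\land b=(y\land a)\land b=y\land a\land b$, while $y\land b\land y=y\land b$ gives $(a\land y)\land(b\land y)=a\land(y\land b\land y)=a\land y\land b$; and $a\land y\land b=a\land b\land y$ because a left-handed normal skew lattice satisfies $u\land v\land w=u\land w\land v$ — indeed $u\land v\land w=u\land(v\land w)\land u=u\land(w\land v)\land u=u\land w\land v$ by the left-handed law and normality. The right-handed case is dual: $y\land a\land y=a\land y$ and $y\land b\land y=b\land y$ yield $(y\land a)\land(y\land b)=a\land y\land b$ and $(a\land y)\land(b\land y)=a\land b\land y$, and $a\land y\land b=y\land a\land b$ because a right-handed normal skew lattice satisfies $u\land v\land w=v\land u\land w$, proved in the same one-line way from $w\land(u\land v)\land w=w\land(v\land u)\land w$.

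The only non-routine step is the reduction via the subdirect decomposition; this is also exactly where a direct attack runs into trouble, because in a product $y\land a\land y\land b$ one cannot in general delete the interior occurrence of $y$, so rewriting $(y\land x_i)\land(y\land x_j)$ symmetrically in $i$ and $j$ stalls — whereas once the skew lattice is left-handed or right-handed that interior $y$ either collapses into $y\land a$ or swaps to the front as $a\land y$, and normality finishes the job. I would finally double-check that nothing beyond normality of $S$ is used (in particular neither a top $\DD$-class nor join-completeness), and that it is the stated \emph{equations}, not merely the congruence $\DD$, that descend through the subdirect embedding.
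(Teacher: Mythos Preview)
Your proof is correct, but it takes a noticeably longer route than the paper's. The paper does it in one line: since $(S,\land)$ is a normal band, normality gives the four-variable middle-commutativity $a\land b\land c\land d=a\land c\land b\land d$; combined with idempotency this yields
\[
(y\land x_i)\land(y\land x_j)=y\land x_i\land y\land x_j=y\land y\land x_i\land x_j=y\land x_i\land x_j,
\]
after which one uses $x_i\land x_j=x_j\land x_i$ and reverses the steps. (The paper treats only $\{y\land x_i\}$ explicitly; the set $\{x_i\land y\}$ is dual.)

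Your reduction via Leech's second decomposition into left- and right-handed factors is valid, and the case computations are fine. However, your remark that a direct attack ``runs into trouble, because in a product $y\land a\land y\land b$ one cannot in general delete the interior occurrence of $y$'' is mistaken: normality of $(S,\land)$ as a band is exactly the license to commute any two \emph{interior} factors, so $y\land a\land y\land b=y\land y\land a\land b=y\land a\land b$ directly, with no handedness hypothesis needed. In effect your decomposition argument re-proves this four-variable consequence of normality (your identities $u\land v\land w=u\land w\land v$ in the left-handed case and $u\land v\land w=v\land u\land w$ in the right-handed case, glued via the subdirect embedding, amount to $a\land b\land c\land d=a\land c\land b\land d$). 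What your approach buys is self-containment if one has only the three-variable identity $x\land y\land z\land x=x\land z\land y\land x$ at hand and does not want to invoke its (standard, for bands) equivalence with the four-variable form; what the paper's approach buys is brevity.
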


\begin{proof}
Take $i,j\in I$. Using  normality,  idempotency of $\land$ and $x_i\land x_j=x_j\land x_i$ we obtain:
\[(y\land x_i)\land (y\land x_j)=y\land x_i\land x_j=y\land x_j\land x_i=y\land x_j\land y\land x_i=(y\land x_j)\land (y\land x_i).
\]
\end{proof}

We will use the following technical result in the proof of Theorem \ref{th-frames-cha}.

\begin{lemma}\label{lemma:tau2}
Let $S$ be a join complete strongly distributive skew lattice with a top $\DD$-class $T$, $y\in S$, $\{x_i\,|\, i\in I\}$ a commuting subset of $S$ and $t\in T$ s.t.  $x_i\leq t$ for all $i\in I$. Denoting $x=\bigvee x_i$ and $\tau=y\lor x\lor t\lor y\lor x$, the following holds:
\[(\bigvee_{i\in I} (x_i\land \tau))\land y=(\bigvee_{i\in I} x_i)\land y.
\]
\end{lemma}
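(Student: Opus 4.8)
The plan is to establish the single identity $\bigvee_{i}(x_i\land\tau)=x\land\tau$ and then conclude with Lemma~\ref{lemma:reg}. I first record the basic facts about $\tau$. Since each $x_i\leq t$ and $x=\bigvee x_i$ is the least upper bound of the $x_i$, we also have $x\leq t$, so $x$ and all the $x_i$ lie in the lattice section $t\darrow$. In the maximal lattice image $S/\DD$ one has $\DD_\tau=\DD_y\lor\DD_x\lor\DD_t\lor\DD_y\lor\DD_x=\DD_t$, because $\DD_t$ is the top element of $S/\DD$; hence $\tau\in T$, and consequently $x\preceq\tau$, $y\preceq\tau$ and $x_i\preceq\tau$ for every $i$. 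Applying Lemma~\ref{lemma:reg}(i) with lower bound $0$ and upper bound $\tau$ gives $x\land\tau\land y=x\land y$. Once the displayed identity is known, we therefore obtain
\[
\Bigl(\bigvee_i(x_i\land\tau)\Bigr)\land y=(x\land\tau)\land y=x\land\tau\land y=x\land y=\Bigl(\bigvee_i x_i\Bigr)\land y .
\]

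To prove $\bigvee_i(x_i\land\tau)=x\land\tau$, I show that $x\land\tau$ is the supremum of the family $\{x_i\land\tau\mid i\in I\}$; this family is a commuting subset by the lemma immediately preceding the present one, so its join $w:=\bigvee_i(x_i\land\tau)$ exists. That $x\land\tau$ is an \emph{upper bound} of the family is a short handedness-free computation from $x_i\leq x$ and $x\preceq\tau$: substituting $x_i=x_i\land x$ and using idempotency of $\land$ gives $(x_i\land\tau)\land(x\land\tau)=x_i\land x\land\tau\land x\land\tau=x_i\land(x\land\tau)=x_i\land\tau$, and substituting $x_i=x\land x_i$ and using $x\land\tau\land x=x$ (which is exactly $x\preceq\tau$) gives $(x\land\tau)\land(x_i\land\tau)=x\land\tau\land x\land x_i\land\tau=x\land x_i\land\tau=x_i\land\tau$; hence $x_i\land\tau\leq x\land\tau$ and so $w\leq x\land\tau$. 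It remains to check $w\,\DD\,(x\land\tau)$. Since $\DD$ is a congruence and $\DD_\tau=\DD_t$ is the top of $S/\DD$, we have $\DD_{x_i\land\tau}=\DD_{x_i}$ and $\DD_{x\land\tau}=\DD_x$; from $x_i\land\tau\leq w$ we get $\DD_{x_i}\leq\DD_w$ for all $i$, while from $w\leq x\land\tau$ we get $\DD_w\leq\DD_x$. Finally, $t\darrow$ is a lattice section of $S$ on which the join $\bigvee x_i$ coincides with the join computed in $S$, and $t\darrow\cong S/\DD$ is a complete lattice (joins of subsets of $t\darrow$ exist in $S$ by join completeness and land in $t\darrow$); hence $\DD_x=\bigvee_i\DD_{x_i}\leq\DD_w$. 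Thus $\DD_w=\DD_x=\DD_{x\land\tau}$, and since two $\DD$-related elements comparable in the natural partial order must coincide (a $\DD$-class is a rectangular band), $w=x\land\tau$.

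The delicate point is that a skew lattice is not a lattice: $\land$ is not monotone in general, and a supremum in $S$ need not project to the corresponding supremum in $S/\DD$. Both obstacles are avoided precisely because $\tau$ lies in the top $\DD$-class $T$ — which is exactly the role played by the term $t$ in the definition $\tau=y\lor x\lor t\lor y\lor x$ — and because the $x_i$ all lie in the lattice section $t\darrow$: the former puts us in the configuration $x_i\leq x\preceq\tau$, where the two identities above do yield $x_i\land\tau\leq x\land\tau$, and the latter allows $x=\bigvee x_i$ to be handled inside the complete lattice $t\darrow\cong S/\DD$. Verifying these two facts with care — above all the handedness-free manipulations underlying $x_i\land\tau\leq x\land\tau$ — is where the real work lies; the remainder is bookkeeping with Lemma~\ref{lemma:reg} and Leech's first decomposition theorem.
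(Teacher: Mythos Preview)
Your proof is correct. The underlying ingredients are the same as in the paper --- the lattice sections $t\darrow$ and $\tau\darrow$, the fact that $\tau\in T$, and the uniqueness of the element of a given $\DD$-class lying below a fixed top-class element --- but you organize them differently. The paper's argument sandwiches both sides between copies of $\tau$: it first rewrites $(\bigvee_i(x_i\land\tau))\land y$ as $\tau\land(\bigvee_i(x_i\land\tau))\land y\land\tau$ using absorption and the fact that the join lies below $\tau$, then invokes Lemma~\ref{lemma:normal} to identify this with $\tau\land(\bigvee_i x_i)\land y\land\tau$ (same $\DD$-class, both $\leq\tau$), and finally strips the outer $\tau$'s. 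You instead prove the sharper intermediate identity $\bigvee_i(x_i\land\tau)=x\land\tau$ directly, by checking that $x\land\tau$ is an upper bound and then matching $\DD$-classes via the lattice section $t\darrow\cong S/\DD$; the conclusion then drops out of a single application of Lemma~\ref{lemma:reg}. Your route has the virtue of making explicit the step the paper leaves implicit --- namely why $\bigvee_i(x_i\land\tau)$ and $\bigvee_i x_i$ lie in the same $\DD$-class --- and it isolates a reusable identity rather than carrying $y$ through the whole computation.
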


\begin{proof}
Using the absorption we obtain  $\tau \land x=x$, $\tau \land x_i=x_i$ and $y\land \tau=y$. Moreover, $\tau\land x_i\land \tau\leq \tau$ for all $i$, and thus $\bigvee(\tau\land x_i\land \tau)\leq \tau$.
That yields:
\[
\begin{array}{rcl}
(\bigvee_{i\in I} (x_i\land \tau))\land y& =&(\bigvee_{i\in I} (\tau \land x_i\land \tau))\land y\land \tau    \\
  & = &\tau\land (\bigvee_{i\in I} (\tau \land x_i\land \tau))\land y\land \tau.\\
   & = &\tau\land (\bigvee_{i\in I} (x_i\land \tau))\land y\land \tau.\\
\end{array}
\]

We claim that:
\begin{equation}\label{eq:eq6}
\tau\land (\bigvee_{i\in I} (x_i\land \tau))\land y\land \tau=\tau\land (\bigvee_{i\in I} x_i)\land y\land \tau.
\end{equation}
 Both sides of \eqref{eq:eq6} lie in the same $\DD$-class and they are both below $\tau$ w.r.t. the natural partial order. Hence they must be equal by  Lemma \ref{lemma:normal}.

Finally, the right side of \eqref{eq:eq6} simplifies to
\begin{equation}\label{eq:eq6'}
 (\bigvee_{i\in I} x_i)\land y.
\end{equation}
\end{proof}

A \emph{noncommutative frame} is a strongly distributive, join complete skew lattice that satisfies the infinite distributive laws:
\begin{equation}\label{eq:inf-dist-law}
 (\bigvee_i x_i)\land y=\bigvee_i (x_i\land y) \qquad \text{and} \qquad
x\land ({\bigvee_i y_i})=\bigvee_i (x\land y_i)
\end{equation}
for all $x,y\in S$ and all commuting subsets $\{x_i\,|\, i\in I\}$  and $\{y_i\,|\, i\in I\}$.

\begin{theorem}\label{th:ncframes}
Let $S$ be a strongly distributive skew lattice with $0$ such that $S/\DD$ is a frame. Then $S$ is a noncommutative frame.
\end{theorem}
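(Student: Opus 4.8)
The plan is to verify the two properties which, added to strong distributivity, characterize a noncommutative frame: that $S$ is join complete, and that it obeys the two infinite distributive laws~\eqref{eq:inf-dist-law}. I would begin by recording the structure supplied by the hypothesis on $S/\DD$. Since $S/\DD$ is a frame it has a greatest element $1$, so, writing $\varphi\colon S\to S/\DD$ for the quotient homomorphism, $S$ has a top $\DD$-class $T:=\varphi^{-1}(1)$. As a strongly distributive skew lattice, $S$ is symmetric and normal; hence $u\darrow$ is a lattice for every $u\in S$ by Lemma~\ref{lemma:normal-lattice}, $\DD$ restricts to the identity relation on each such $u\darrow$ (an immediate consequence of Lemma~\ref{lemma:normal}), and for $t\in T$ the lattice $t\darrow$ is a lattice section, hence isomorphic to the complete lattice $S/\DD$.

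For join completeness, let $A=\{x_i\mid i\in I\}$ be a commuting subset. I would first reduce to the case where $A$ is directed: extending $A$ to a maximal commuting subset $M$, which by symmetry is a sublattice of $S$, I replace $A$ by the directed family of all finite joins $\bigvee_{j\in F}x_j$ ($F\subseteq I$ finite) computed in $M$; since finite joins of commuting elements are suprema in $S$, this directed commuting subset has exactly the same upper bounds, and hence the same supremum, as $A$. For such a directed family the $\DD$-classes $\varphi(x_i)$ have a join $d$ in the frame $S/\DD$, and any supremum of $A$ must lie in the single $\DD$-class $\varphi^{-1}(d)$; what remains is to produce an $s\in\varphi^{-1}(d)$ with $x_i\le s$ for all $i$ and least such --- obtaining $s$ by gluing the directed net through the $\DD$-classes lying below $d$, using $d=\bigvee_i\varphi(x_i)$ and the completeness of $S/\DD$, with minimality following from Lemma~\ref{lemma:normal} once one observes that every upper bound $s'$ of $A$ satisfies $\varphi(s')\ge d$. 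I expect the existence of this supremum to be the main obstacle; it is exactly the point at which one uses that $S/\DD$ is a frame rather than merely a distributive lattice.

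Granting join completeness, the infinite distributive laws can be obtained by an argument in the style of the proof of Theorem~\ref{th:phi-iso}. Fix a commuting subset $\{x_i\mid i\in I\}$ and $y\in S$. By the lemma guaranteeing that $\{x_i\land y\mid i\in I\}$ is again a commuting subset, $\bigvee_i(x_i\land y)$ exists. Applying $\varphi$ and invoking the infinite distributive law of $S/\DD$ yields
\[
\varphi\Big(\big(\textstyle\bigvee_i x_i\big)\land y\Big)=\big(\textstyle\bigvee_i\varphi(x_i)\big)\land\varphi(y)=\textstyle\bigvee_i\big(\varphi(x_i)\land\varphi(y)\big)=\varphi\Big(\textstyle\bigvee_i(x_i\land y)\Big),
\]
so the two sides of the first identity in~\eqref{eq:inf-dist-law} are $\DD$-related. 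A computation with strong distributivity and regularity (Lemma~\ref{lemma:reg}) then places both of them in a common lattice $u\darrow$, on which $\DD$ is the equality relation; being $\DD$-related they coincide. The second identity $x\land\bigvee_i y_i=\bigvee_i(x\land y_i)$ follows symmetrically, and therefore $S$ is a noncommutative frame.
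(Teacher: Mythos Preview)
Your overall strategy is correct---verify join completeness and then the infinite distributive laws---but the execution of the first step has a real gap, and you are working much harder than necessary.

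\textbf{Join completeness.} You propose to extend a commuting subset $A$ to a maximal commuting sublattice $M$, pass to a directed family of finite joins, and then construct the supremum ``by gluing the directed net through the $\DD$-classes lying below $d$''. That last phrase is not an argument: no construction of such an $s$ is given, and you yourself flag this as ``the main obstacle''. In fact the obstacle disappears once you notice the one-line argument the paper uses. Since lattice sections are maximal commuting subsets, any commuting subset $\{x_i\}$ is contained in some $t\darrow$ with $t$ in the top $\DD$-class $T$; but $t\darrow$ is a lattice section, hence isomorphic to the frame $S/\DD$, hence complete. The unique element $x\in t\darrow$ lying in the $\DD$-class $\bigvee_i\DD_{x_i}$ is then the desired supremum in $S$ (uniqueness in its $\DD$-class follows from Lemma~\ref{lemma:normal}). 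There is no need for directed nets or gluing.

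\textbf{Infinite distributivity.} Your plan---show the two sides are $\DD$-related via the frame law in $S/\DD$, then argue they lie in a common $u\darrow$ where $\DD$ collapses---is the right idea, and is essentially what the paper does. But two points are glossed over. First, applying $\varphi$ to an infinite join requires knowing that $\varphi(\bigvee_i x_i)=\bigvee_i\varphi(x_i)$; this is immediate once the join is constructed as above inside a lattice section, but is not automatic from your description. Second, ``a computation with strong distributivity and regularity then places both of them in a common lattice $u\darrow$'' needs a concrete $u$. The paper takes $\tau=y\lor x\lor t\lor y\lor x$ (with $x=\bigvee_i x_i$ and $t$ as above), works with the conjugates $\tau\land(-)\land\tau$ inside the frame $\tau\darrow$, and then invokes the dedicated Lemma~\ref{lemma:tau2} to strip the $\tau$'s back off. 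Without naming a specific $\tau$ and carrying out this simplification, the argument is incomplete.
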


\begin{proof}
Being   a frame $S/\DD$ must be bounded and thus $S$ has a top $\DD$ class $T$. Let $\{x_i\,|\, i\in I\}$ be a commuting subset. We claim that $\bigvee x_i$ exists in $S$. Since lattice sections are maximal commuting subsets it follows that there exist $t\in T$ such that $x_i\leq t$ for all $i\in I$. Let $x$ be the single element in the $\DD$-class $\bigvee \DD_{x_i}$ that satisfies $x\leq t$. Then $x=\bigvee x_i$.

 It remains to prove that $S$ satisfies the infinite distributive laws \eqref{eq:inf-dist-law}. To see this take a commuting subset $\{x_i\,|\, i\in I\}$ and $y\in S$. Since $\{x_i\,|\, i\in I\}$ is a commuting set it is contained in a lattice section, i.e. there exists $t\in T$ s.t. $x_i\leq t$ for all $i$. Set $x=\bigvee_{i\in I} x_i$ and $\tau=y\lor x\lor t\lor y\lor x$. Since $\tau\darrow$ is a lattice section it must be isomorphic to $S/\DD$ and thus a frame. We obtain $x\leq t$,  $\tau\land x=x$, $\tau\land x_i=x_i$ and $y\land \tau=y$.
The elements $\tau \land x\land \tau$, $\tau\land y\land \tau$ and $\tau \land x_i\land \tau$ all lie in the frame $\tau\darrow$. Hence:
\begin{equation}\label{eq:inf-dist-tau}
 (\bigvee_i (\tau \land  x_i\land \tau))\land (\tau\land y\land \tau)=\bigvee_i ((\tau\land x_i\land \tau)\land (\tau\land y\land \tau))
\end{equation}
since complete Heyting algebras satisfy the infinite distributive laws. The left side of \eqref{eq:inf-dist-tau} simplifies to:
\[ (\bigvee_i (x_i\land \tau))\land (\tau\land y)
\]
which further simplifies to:
\begin{equation}\label{eq:left-inf}
(\bigvee_i (x_i\land \tau))\land y
\end{equation}
by Lemma \ref{lemma:reg}.
By Lemma \ref{lemma:tau2}  the expression in \eqref{eq:left-inf} equals
\[(\bigvee_i x_i)\land y.
\]
On the other hand, the right side of \eqref{eq:inf-dist-tau} simplifies to:
\[ \bigvee_i (x_i\land y).
\]
That proves:
\[ (\bigvee_i x_i)\land y=\bigvee_i (x_i\land y).
\]

We prove the infinite distributive law
\[x\land ({\bigvee_i y_i})=\bigvee_i (x\land y_i).
\]
in a similar way.\end{proof}

%\begin{lemma}\label{lemma:tau}
%Let $S$ be a skew lattice with a top $\DD$-class $T$, $x,y\in S$,  and $t\in T$ s.t. $x\leq t$. Set $\tau=y\lor x\lor t\lor y\lor x$. Then:
%\begin{itemize}
%\item[(i)] $\tau \land x=x$,
%\item[(ii)] $y\land \tau=y$.
%\end{itemize}
%\end{lemma}

%\begin{proof}
%Both (i) and (ii) follow by absorption.
%\end{proof}

We are now ready to prove that noncommutative frames are precisely  join complete noncommutative Heyting algebras that satisfy the infinite distributive laws.

\begin{theorem}\label{th-frames-cha}
Let $S$ be a noncommutative frame and $t$ a distinguished element in the top $\DD$-class $T$ of $S$. Given  $a,b\in S$ set:
\begin{equation}\label{eq:imp-frame}
a\rightarrow b=\bigvee_{\{x\in (b\lor t\lor b)\darrow \;|\; x\land (b\lor (t\land a\land t)\lor b)\leq b \}} x.
\end{equation}
Then $S$ is a join complete noncommutative Heyting algebra.

On the other hand, if $(S;\land ,\lor, \rightarrow, 0, t) $ is a join complete noncommutative Heyting algebra then $S$ satisfies \eqref{eq:imp-frame} and the infinite distributive laws \eqref{eq:inf-dist-law}.  
\end{theorem}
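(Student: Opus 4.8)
The plan is to reduce both implications to three facts already available: Theorem~\ref{th:ncframes} (a strongly distributive skew lattice with $0$ whose maximal lattice image is a frame is a noncommutative frame), Theorems~\ref{th:phi-iso} and~\ref{th:imp-imp-t}, and the standard description of the relative pseudocomplement in a frame $L$, namely $c\rightarrow b=\bigvee\{x\in L\mid x\land c\leq b\}$. The pivot is that for every $t'$ in the top $\DD$-class $T$ the lattice section $t'\darrow$ is isomorphic to $S/\DD$; hence $S$ is a noncommutative frame exactly when $S/\DD$ is a frame, and in that case each $t'\darrow$ is a complete Heyting algebra.

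\emph{Noncommutative frame $\Rightarrow$ join complete noncommutative Heyting algebra.} First I would verify that $S/\DD$ is a frame. It is a distributive lattice, being the maximal lattice image of a strongly distributive skew lattice, and it is complete: any subset of $S/\DD$ lifts to a commuting subset $\{x_i\}$ of a single lattice section, its join $\bigvee x_i$ exists in $S$ by join completeness and lies in that section because the section's top is an upper bound of the $x_i$, so the quotient map sends $\bigvee x_i$ to the join of the $\DD_{x_i}$ in $S/\DD$. The infinite distributive law of $S$ descends to $S/\DD$, so $S/\DD$ is a complete Heyting algebra, i.e. a frame. By Theorem~\ref{th:imp-imp-t}, $(S;\land,\lor,\rightarrow_t,0,t)$ is then a noncommutative Heyting algebra, and it is join complete since $S$ is. To identify $\rightarrow_t$ with the operation of \eqref{eq:imp-frame}, fix $a,b$, put $t'=b\lor t\lor b$ and $c=b\lor(t\land a\land t)\lor b$; by Lemma~\ref{lemma:d-cong} both $b$ and $c$ lie in the lattice section $t'\darrow$, which is a frame. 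The remark following Theorem~\ref{th:phi-iso} identifies $a\rightarrow_t b$ with the implication of $c$ relative to $b$ computed in $t'\darrow$, and the frame formula above rewrites this as $\bigvee\{x\in t'\darrow\mid x\land c\leq b\}$, the join taken in $t'\darrow$; since $t'$ bounds this commuting set, that join agrees with the one in $S$, so it is exactly the right-hand side of \eqref{eq:imp-frame}. Hence $\rightarrow_t=\rightarrow$ and $S$ is a join complete noncommutative Heyting algebra.

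\emph{Join complete noncommutative Heyting algebra $\Rightarrow$ \eqref{eq:imp-frame} and the infinite distributive laws.} Let $(S;\land,\lor,\rightarrow,0,t)$ be a join complete noncommutative Heyting algebra. By Theorem~\ref{th:phi-iso}(iii), $S/\DD\cong t\darrow$ is a Heyting algebra; by the completeness argument above it is complete, hence a frame, so Theorem~\ref{th:ncframes} shows $S$ is a noncommutative frame and in particular satisfies \eqref{eq:inf-dist-law} (alternatively this can be checked directly using Lemma~\ref{lemma:tau2}). For \eqref{eq:imp-frame}, the remark following Theorem~\ref{th:phi-iso} expresses $a\rightarrow b$ as the implication of $b\lor(t\land a\land t)\lor b$ relative to $b$ inside the lattice section $(b\lor t\lor b)\darrow$; as that section is a frame, the relative pseudocomplement formula turns this into $\bigvee\{x\in(b\lor t\lor b)\darrow\mid x\land(b\lor(t\land a\land t)\lor b)\leq b\}$ with join computed in $S$, which is \eqref{eq:imp-frame}.

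\emph{Main obstacle.} The cited theorems do the heavy lifting, so the only real work is bookkeeping: showing that the maximal lattice image inherits completeness from the join completeness of $S$ and that the quotient preserves the relevant suprema, and checking that the supremum in \eqref{eq:imp-frame}, a priori a supremum of a commuting subset of $S$, coincides with the frame-theoretic supremum inside $(b\lor t\lor b)\darrow$ used in the relative-pseudocomplement formula. Both reduce to the observation that the top of a lattice section bounds each of its commuting subsets, so no idea beyond the earlier results is required.
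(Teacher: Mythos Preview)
Your proposal is correct and follows essentially the same route as the paper: both directions hinge on identifying the implication of \eqref{eq:imp-frame} with $\rightarrow_t$ via the relative-pseudocomplement formula inside the frame $(b\lor t\lor b)\darrow$, and on invoking Theorem~\ref{th:ncframes} for the infinite distributive laws. The only cosmetic difference is that in the first implication the paper starts from the formula \eqref{eq:imp-frame} and checks it lands in the right $\DD$-class below $b\lor t\lor b$ (hence equals $\rightarrow_t$), whereas you start from $\rightarrow_t$ and rewrite it as \eqref{eq:imp-frame}; you are also more explicit than the paper in verifying that $S/\DD$ is complete.
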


\begin{proof}  Let $S$ be a noncommutative frame. A standard result in the theory of Heyting algebras yields that frames are exactly the complete Heyting algebras. Thus the operation $\rightarrow$ as defined in the theorem yields a Heyting implication on the quotient $S/\DD$ which becomes a Heyting algebra.  In order to  prove that $S$ is a noncommutative Heyting algebra, by Theorem \ref{th:imp-imp-t} it suffices to show that $\rightarrow$ equals $\rightarrow_t$. To see this we need to show that $a\rightarrow b$ is an element in the $\DD$-class $\DD_a\rightarrow \DD_b$ that lies below $b\lor t\lor b$. (Such an element is unique by Lemma \ref{lemma:normal}.) The fact that $a\rightarrow b\in \DD_a\rightarrow \DD_b$  is clear since the operation $\rightarrow $  on $S/\DD$ is the usual Heyting implication. We simplify the notation by writing $\bigvee $ instead of $\bigvee_{\{x\in (b\lor t\lor b)\darrow \;|\; x\land (b\lor (t\land a\land t)\lor b)\leq b \}}$.  It remains to prove that $\bigvee x\leq b\lor t\lor b$. We have:
\[(\bigvee x)\land (b\lor t\lor b)=\bigvee (x\land(b\lor t\lor b))=\bigvee x,
\]
where we used $x\in (b\lor t\lor b)\darrow$. Similarly we prove $(b\lor t\lor b)\land (\bigvee x)=\bigvee x$, and $\bigvee x\leq b\lor t\lor b$ follows.

Assume now that  $(S;\land ,\lor, \rightarrow, 0, t) $ is a join complete noncommutative Heyting algebra. 
By (NH1) $a\rightarrow b$ equals $(b\lor (t\land a\land t)\lor b)\rightarrow b$, which can  be interpreted as computed in the Heyting algebra $(b\lor t\lor b)\darrow$. Proceeding with the computation in $(b\lor t\lor b)\darrow$ we obtain:
\[(b\lor (t\land a\land t)\lor b)\rightarrow b=\bigvee_{\{x\in (b\lor t\lor b)\darrow \;|\; x\land (b\lor (t\land a\land t)\lor b)\leq b \}} x.
\]
Thus $S$ satisfies  \eqref{eq:imp-frame}. 
$S$ satisfies the infinite distributive law by Theorem \ref{th:ncframes}.
%It remains to prove that $S$ satisfies the infinite distributive laws \eqref{eq:inf-dist-law}. To see this take a commuting subset $\{x_i\,|\, i\in I\}$ and $y\in S$. Since $\{x_i\,|\, i\in I\}$ is a commuting set it is contained in a lattice section, i.e. there exists $t'\in T$ s.t. $x_i\leq t'$ for all $i$. Set $x=\bigvee_{i\in I} x_i$ and $\tau=y\lor x\lor t'\lor y\lor x$. Then also $x\leq t'$ and thus $\tau\land x=x$, $\tau\land x_i=x_i$ and $y\land \tau=y$ by Lemma \ref{lemma:tau}.
%The elements $\tau \land x\land \tau$, $\tau\land y\land \tau$ and $\tau \land x_i\land \tau$ all lie in the (join) complete Heyting algebra $\tau\darrow$ and thus:
%\begin{equation}\label{eq:inf-dist-tau}
% (\bigvee_i (\tau \land  x_i\land \tau))\land (\tau\land y\land \tau)=\bigvee_i ((\tau\land x_i\land \tau)\land (\tau\land y\land \tau))
%\end{equation}
%since complete Heyting algebras satisfy the infinite distributive laws. The left side of (9) simplifies to:
%\[ (\bigvee_i (x_i\land \tau))\land (\tau\land y)
%\]
%which further simplifies to:
%\begin{equation}\label{eq:left-inf}
%(\bigvee_i (x_i\land \tau))\land y
%\end{equation}
%by Lemma \ref{lemma:reg}.
%By Lemma \ref{lemma:tau} (3) the expression in \eqref{eq:left-inf} equals
%\[(\bigvee_i x_i)\land y.
%\]
%On the other hand, the right side of (9) simplifies to:
%\[ \bigvee_i (x_i\land y).
%\]
%That proves:
%\[ (\bigvee_i x_i)\land y=\bigvee_i (x_i\land y).
%\]
%
%We prove the infinite distributive law
%\[x\land ({\bigvee_i y_i})=\bigvee_i (x\land y_i).
%\]
%in a similar way.
\end{proof}

%\section*{Acknowledgment}

\end{document}